\numberwithin{equation}{section}
\numberwithin{figure}{section}
  \theoremstyle{plain}
  \newtheorem{thm}{\protect\theoremname}[section]
  \theoremstyle{definition}
  \newtheorem{defn}[thm]{\protect\definitionname}
  \theoremstyle{remark}
  \newtheorem{rem}[thm]{\protect\remarkname}
  \theoremstyle{plain}
  \newtheorem{prop}[thm]{\protect\propositionname}
  \theoremstyle{definition}
  \newtheorem{example}[thm]{\protect\examplename}
  \theoremstyle{plain}
  \newtheorem{lemma}[thm]{Lemma}
  \theoremstyle{plain}
  \newtheorem{conj}[thm]{Conjecture}
\DeclareMathOperator{\diag}{diag}
\DeclareMathOperator{\Tr}{tr}
\newcommand{\SPD}{\operatorname{SPD}}
\definecolor{tan}{rgb}{0.82, 0.71, 0.55}
\providecommand{\definitionname}{Definition}
\providecommand{\examplename}{Example}
\providecommand{\propositionname}{Proposition}
\providecommand{\remarkname}{Remark}
\providecommand{\theoremname}{Theorem}
\begin{document}

\title{Calculation and Properties of Zonal Polynomials%
  \footnote{This is a post-peer-review, pre-copyedit version of an article published in
    Mathematics in Computer Science. The final authenticated version is available online
    at: http://dx.doi.org/10.1007/s11786-020-00458-0}}

\author{Lin Jiu}
\address{%
Department of Mathematics and Statistics,Dalhousie University, 
6316 Coburg Road, Halifax, Nova Scotia, Canada B3H 4R2 \\
Lin.Jiu@dal.ca
}
\author{Christoph Koutschan}
\address{%
Johann Radon Institute for Computational and Applied Mathematics (RICAM),
Altenberger Stra\ss e 69, A-4040 Linz, Austria \\
christoph.koutschan@ricam.oeaw.ac.at
}

\thanks{%
LJ was supported by the Austrian Science Fund (FWF): P29467-N32.\\
CK was supported by the Austrian Science Fund (FWF): P29467-N32 and F5011-N15.
}

\subjclass{Primary 05E05; Secondary 33C20 33C70 15B52 65C60}

\keywords{zonal polynomial, symmetric function, integer partition, Laplace-Beltrami operator,
  Wishart matrix, hypergeometric function of a matrix argument}

\begin{abstract}
  We investigate the zonal polynomials, a family of symmetric polynomials that
  appear in many mathematical contexts, such as multivariate statistics,
  differential geometry, representation theory, and combinatorics.  We present
  two computer algebra packages, in SageMath and in Mathematica, for their
  computation. With the help of these software packages, we carry out an experimental
  mathematics study of some properties of zonal polynomials. Moreover, we
  derive and prove closed forms for several infinite families of zonal
  polynomial coefficients.
\end{abstract}

\maketitle

\vspace{-8pt}

\section{Introduction}
At the beginning of our study, we recall the generalized hypergeometric
function ${}_pF_q$, defined as the infinite series
\begin{equation}\label{eq:pFq}
  {}_pF_q\left(\genfrac{}{}{0pt}{}{a_{1},\ldots,a_{p}}{b_{1},\ldots,b_{q}}\,\bigg|\,z\right) :=
  \sum_{n=0}^{\infty}\frac{(a_{1})_{n}\cdots(a_{p})_{n}}{(b_{1})_{n}\cdots(b_{q})_{n}}\cdot\frac{z^{n}}{n!},
\end{equation}
where for positive integer $m$,
$(a)_{m}:=a(a+1)\cdots(a+m-1)$ is the
Pochhammer symbol.  What is less well-known is a remarkable generalization of
this hypergeometric function of a matrix argument, as follows.
\begin{defn}
Given an $m\times m$ symmetric, positive-definite matrix $Y$, the
hypergeometric function ${}_pF_q$ of matrix argument $Y$ is defined as 
\begin{equation}\label{eq:MatrixpFq}
  {}_{p}F_{q}\left(\genfrac{}{}{0pt}{}{a_{1},\ldots,a_{p}}{b_{1},\ldots,b_{q}}\,\bigg|\,Y\right) :=
  \sum_{n=0}^{\infty}\sum_{\lambda\in\mathcal{P}_{n}}\frac{(a_{1})_{\lambda}\cdots(a_{p})_{\lambda}}
      {(b_{1})_{\lambda}\cdots(b_{q})_{\lambda}}\cdot\frac{\mathcal{C}_{\lambda}(Y)}{n!},
\end{equation}
where 
\begin{itemize}
\item $\mathcal{P}_{n}$ is the set of all integer partitions of $n$, in which,
every partition $\lambda\in\mathcal{P}_{n}$ is defined to be a tuple
$\lambda=(\lambda_{1},\ldots,\lambda_{k})$ such that 
$\lambda_{1}\geq\lambda_{2}\geq\cdots\geq\lambda_{k}\geq 1$ and 
$ \lambda_{1}+\cdots+\lambda_{k}=n$;
\item $(a)_{\lambda}$ is the generalized Pochhammer symbol, defined as
\[
(a)_{\lambda}=(a)_{(\lambda_{1},\dots,\lambda_{k})}:=\prod_{i=1}^{k}\left(a-\frac{i-1}{2}\right)_{\!\!\lambda_{i}}\!\!;
\]
\item and finally~$\mathcal{C}_{\lambda}(Y)$ denotes the zonal polynomial
of~$Y$, indexed by a partition~$\lambda$, which is a symmetric homogeneous polynomial
of degree~$n$ (see Section~\ref{sec:DEFS}) in the eigenvalues
$y_{1},\ldots,y_{m}$ of~$Y$, satisfying 
\begin{equation}\label{eq:TrZonal}
  \sum_{\lambda\in\mathcal{P}_{n}}\mathcal{C}_{\lambda}(Y)=(\Tr Y)^{n}=(y_{1}+\cdots+y_{m})^{n}.
\end{equation}
\end{itemize}
\end{defn}

By noting that the zonal polynomial is zero whenever~$k$, the number of parts
of~$\lambda$, exceeds the dimension of~$Y$ (see Remark~\ref{rem:polyzero}),
one recognizes that \eqref{eq:MatrixpFq} indeed specializes to \eqref{eq:pFq}
when $Y$ is a $(1\times1)$-matrix, since for each~$n$, only the partition
$\lambda=(n)$ contributes.

The hypergeometric function of a matrix argument is used in multivariate
statistics, in connection with the Wishart distribution~\cite{Wishart28}; see
Section~\ref{sec:Wishart} and \cite{GrossRichards87} for an introduction.  For
example, the extreme eigenvalues of random matrices can be expressed in terms
of this hypergeometric function~\cite{ButlerPaige11,Johnstone01}.  However,
the numerical evaluation of ${}_pF_q$ functions of a matrix argument is a
notorious problem in multivariate distribution theory~\cite{KoevEdelman06}.

Recent progress on the numerical evaluation is based on the holonomic
gradient method~\cite{NakayamaEtAl11}. In the case $p=q=1$, it is known that
the hypergeometric function $_1F_1(a;c;Y)$ satisfies a holonomic system of
partial differential equations~\cite{Muirhead70} in the variables
$y_1,\dots,y_m$, whose holonomic rank is~$2^m$.  Hashiguchi
et al.~\cite{Hashiguchi} use it to study the cumulative distribution of the
largest eigenvalue of a Wishart matrix. The problem of specializing this
high-dimensional holonomic system to singular regions has been addressed
in~\cite{Noro16}.  As an application, the Wishart distribution arises in the
performance analysis of wireless communication systems under Rayleigh
fading~\cite{SiriteanuEtAl15}. For evaluating the hypergeometric
function of a matrix argument using the holonomic gradient method,
one needs to know the first few zonal polynomials in order to get
accurate initial conditions.

A comprehensive introduction to zonal polynomials~\cite[\S\,35.4]{DLMF} was
given by Takemura~\cite{Takemura}. Interestingly, these polynomials also
appear in completely different mathematical contexts. No direct formula for
their calculation is known, but only partial
results~\cite{KushnerMeisner,Muirhead}. There exist software packages in
Maple~\cite{Stembridge95} and SageMath\footnote{http://doc.sagemath.org/html/en/reference/combinat/sage/combinat/sf/jack.html} to compute with them.

In Section~\ref{sec:DEFS} we give a survey
of different definitions of zonal polynomials. In Section~\ref{sec:calc} we
recall a recursive method by Muirhead~\cite{Muirhead} to calculate zonal
polynomials. Note that there is no general direct formula to obtain the
coefficients of the zonal polynomial.  In the following we will give such
formulas for some special families of zonal polynomial coefficients: In
Section~\ref{sec:zeros} we present conditions under which a
coefficient vanishes, and in Section~\ref{sec:families} we derive closed
forms for the coefficients at the two extremal corners of the coefficient
matrix. Then we give a complete closed form for zonal polynomials in two
variables (Section~\ref{sec:2parts}) and some partial results for three and
four variables (Section~\ref{sec:3+4parts}). Finally, we explain some details
of our software packages, see Sections~\ref{sec:Package} and~\ref{sec:mma},
that were written in the frame of the current work.

\section{\label{sec:DEFS}Definitions of zonal polynomials}

We shall summarize four different definitions of zonal polynomials
involving statistics, differential geometry, representation theory
and combinatorics. Namely, each subsection will present one aspect. 
First of all, we need an important linear space.
\begin{defn}
Let $V_{n}$ be the space of symmetric homogeneous polynomials of
degree $n$ in the variables $y_{1},\ldots,y_{m}$, including the zero polynomial.
Namely, if $f\in V_{n}$, we have 
\begin{itemize}
\item $\deg f=n$ or $f\equiv0$;
\item if $\deg f=n$, then $f$ is symmetric and homogeneous in $y_{1},\ldots,y_{m}$.
\end{itemize}
Moreover, any polynomial $f\in V_{n}$ can also be viewed as a polynomial
in the eigenvalues of an $m\times m$ symmetric, positive-definite matrix~$Y$.
Then, the notations $f(y_1,\dots,y_m)$ and $f(Y)$ are considered equivalent.
Denote the space of $m\times m$ symmetric, positive-definite matrices
by $\SPD(m)$. 
\end{defn}

\subsection{Definition involving the Wishart distribution}\label{sec:Wishart}
The following definitions, claims, and properties in this section
can be found in \cite[pp.~9--22]{Takemura}.
\begin{defn}
Define the elementary symmetric polynomial
\[
  u_{r}(x_{1},\ldots,x_{m}):=\underset{1\leq i_{1}<\cdots<i_{r}\leq m}{\sum}x_{i_{1}}\cdots x_{i_{r}}.
\]
Then, we have a basis for $V_n$: for $\lambda=(\lambda_{1},\ldots,\lambda_{k})\in\mathcal{P}_{n}$, define the polynomials
\[
  \mathcal{U}_{\lambda} :=
  u_{1}^{\lambda_{1}-\lambda_{2}}u_{2}^{\lambda_{2}-\lambda_{3}}\cdots u_{k-1}^{\lambda_{k-1}-\lambda_{k}}u_{k}^{\lambda_{k}}.
\]
\end{defn}
\noindent
Obviously,
$\deg\mathcal{U}_{\lambda} =
  \left(\lambda_{1}-\lambda_{2}\right)+2\left(\lambda_{2}-\lambda_{3}\right)+\cdots+k\lambda_{k} =
  \lambda_{1}+\cdots+\lambda_{k}=n$.
Associate a lexicographical order to $\mathcal{P}_{n}$ as follows:
for $\kappa=\left(\kappa_1,\ldots,\kappa_j\right),\lambda=\left(\lambda_1,\ldots,\lambda_k\right)\in\mathcal{P}_{n}$,
\[
  \kappa>\lambda \quad:\Leftrightarrow\quad
  \kappa_1=\lambda_1\land\cdots\land\kappa_{l-1}=\lambda_{l-1}\land\kappa_{l}>\lambda_{l}
  \quad\text{for some }l.
\]
Then, we can write the basis formed by~$\mathcal{U}_{\lambda}$
as a column vector:
$
\mathcal{U}:=\bigl(\mathcal{U}_{(n)},\allowbreak \mathcal{U}_{(n-1,1)},\ldots,\mathcal{U}_{(1,\ldots,1)}\bigr){}^{T}\!.
$
This subsection presents a definition of $\mathcal{C}_{\lambda}(Y)$
related to the Wishart distribution, defined as follows.
\begin{defn}
Let $X_{\nu\times m}$ be a matrix such that each row is independently drawn
from an $m$-variate normal distribution of mean $0$ and with covariance
matrix~$V$, namely, 
\[
  (x_{i}^{1},\ldots,x_{i}^{m})\sim\mathcal{N}_{m}(0,V)
  \qquad (1\leq i\leq\nu).
\]
Then, we say $S:=X^{T}X$ has the Wishart distribution, denoted by $S=X^{T}X\sim W_{m}(V,\nu)$, where $\nu$ is called the degree of freedom.
\end{defn}
\begin{rem}
Recall the $1$-dimensional case: if $Z_{1},\ldots,Z_{k}\sim\mathcal{N}(0,1)$
are independent Gaussian distributed, then $Q:=Z_{1}+\cdots+Z_{k}\sim\chi_{k}^{2}$.
In other words, the sum of independent Gaussian distributions is chi-square
distributed. Therefore, the Wishart distribution can be viewed as a multi-dimensional
generalization of the chi-square distribution.
\end{rem}
\noindent
Define the linear transform $\tau_{\nu}\colon V_{n}\longrightarrow V_{n}$,
for $Y\in \SPD(m)$, by
\[
  (\tau_{\nu}(\mathcal{U}_{\lambda}))(Y) :=
  \mathbb{E}_{W}[\mathcal{U}_{\lambda}(YW)]\text{ for }W\sim W(I_m,\nu).
\]
As $\mathcal{U}$ forms a basis of $V_{n}$, $\tau_{\nu}(\mathcal{U}):=\left(\tau_{\nu}(\mathcal{U}_{(n)}),\tau_{\nu}(\mathcal{U}_{(n-1,1)}),\ldots,\tau_{\nu}(\mathcal{U}_{(1,\ldots,1)})\right){}^{\!T}$
must be a linear combination of $\mathcal{U}$,
denoted by $\tau_{\nu}(\mathcal{U})=T_{\nu}\mathcal{U}$. 
Properties of the transition matrix $T_{\nu}$ guarantee a diagonalization as $T_{\nu}=\Xi^{-1}\Lambda_{\nu}\Xi$, where 
\begin{itemize}
\item $\Lambda_{\nu}=\diag(2^{n}(\nu/2)_{\lambda})$, for $\lambda\in\mathcal{P}_n$, is the diagonalization of $T_\nu$; 
\item and $\Xi$ is a nonsingular upper triangular matrix, which is uniquely
determined up to a (possibly different) multiplicative constant for
each row.
\end{itemize} 
Now, we can define the zonal polynomials.
\begin{defn}
For $\lambda=(\lambda_1,\ldots,\lambda_k)\in\mathcal{P}_{n}$, the zonal polynomial $\mathcal{Y}_{\lambda}$
is defined by a vector form
\begin{align}
\mathcal{Y}=\left(\mathcal{Y}_{(n)}, \mathcal{Y}_{(n-1,1)}, \ldots, 
\mathcal{Y}_{(1,\ldots,1)}\right)^T
  &= \Xi\,\mathcal{U} \notag \\ 
  &= \Xi\left(\mathcal{U}_{(n)}, \mathcal{U}_{(n-1,1)}, \ldots, \mathcal{U}_{(1,\ldots,1)}
\right)^T\!\!, \label{eq:ZonalY}
\end{align}
and define $\mathcal{C}_{\lambda}(Y)=d_{\lambda}\mathcal{Y}_{\lambda}(Y)$ by the constants $d_\lambda$, given by
\[
  d_{\lambda}=\frac{\underset{i<j}{\prod}(2\lambda_{i}-2\lambda_{j}-i+j)}{\overset{k}{\underset{i=1}{\prod}}(2\lambda_{i}+k-i)!}
  \cdot\frac{2^{n}n!}{(2n)!}.
\]
\end{defn}

\begin{example}
  For $n=4$, $\nu=3$, and $m=2$, we compare an exact computation with a
  Monte-Carlo experiment.  Since $m=2$, we only need to consider partitions
  of~$4$ with at most $2$ parts and get $\mathcal{U}=\left((y_1+y_2)^4,
  y_1y_2(y_1+y_2)^2, y_1^2y_2^2\right)$. 

  The map $\tau_\nu$ is defined to be an expectation. In order to approximate
  $\tau_\nu(\mathcal{U})(Y)$ numerically, we sample a large number of Wishart
  matrices~$W$. In Mathematica, the command
  \begin{mma}
    \In |RandomVariate|[|WishartMatrixDistribution|[3, |IdentityMatrix|[2]]] \\
    \Out \rule{0pt}{8pt}\{\{3.0965, -0.551265\}, \{-0.551265, 1.59861\}\} \\
  \end{mma}
  \noindent
  randomly generates such a $2\times2$ matrix~$W$. For simplicity, let
  $Y=\diag(y_1,y_2)$. The eigenvalues of $YW$ are in general algebraic
  expressions, but after plugging them into the symmetric polynomials
  of~$\mathcal{U}$ and simplifying, one gets polynomials back. For these
  simplifications, it is advisable to employ exact arithmetic instead of
  floating point numbers, and therefore we convert $W$ to have exact rational
  entries at the very beginning. Averaging over $N=10^6$ samples yields the
  following approximation $\sum_{i=1}^N \mathcal{U}(YW_i)/N$ for the vector
  $\tau_\nu(\mathcal{U})(Y)$:
  \[
    \begin{pmatrix}
      945.715\,y_1^4 + 1261.66\,y_1^3y_2 + 1347.86\,y_1^2y_2^2 + 1258.51\,y_1y_2^3 + 947.094\,y_2^4 \\
      210.465\,y_1^3y_2 + 299.699\,y_1^2y_2^2 + 209.768\,y_1y_2^3 \\
      119.769\,y_1^2 y_2^2
    \end{pmatrix}.
  \]

  Now we want to compare this approximate result with the exact one.  For this
  purpose, we proceed ``backwards'', i.e., we start with the zonal polynomials
  $\mathcal{C}_\lambda(Y)$ (their coefficients are given in
  Example~\ref{Tables}).  After dividing them with the constants~$d_\lambda$,
  we can use~\eqref{eq:ZonalY},
  \[
    \Xi \cdot \begin{pmatrix} (y_1+y_2)^4 \\ y_1y_2(y_1+y_2)^2 \\ y_1^2y_2^2 \end{pmatrix} =
    \begin{pmatrix}
      4233600 \cdot \left(y_1^4+y_2^4 + \frac{4}{7}(y_1^3y_2+y_1y_2^3) + \frac{18}{35}y_1^2y_2^2 \right) \\
      211680 \cdot \left(\frac{24}{7}(y_1^3y_2+y_1y_2^3) + \frac{16}{7}y_1^2y_2^2\right) \\
      967680 \cdot y_1^2 y_2^2
    \end{pmatrix}
  \]
  to determine the matrix~$\Xi$. As a result, one obtains
  \[
    \Xi = 120960 \cdot
    \begin{pmatrix}
      35 & -120 & 48 \\
      0 & 6 & -8 \\
      0 & 0 & 8
    \end{pmatrix}
    \quad\text{and}\quad
    T_\nu = \Xi^{-1}\Lambda_{\nu}\Xi =
    \begin{pmatrix}
       945 & -2520 & 720 \\
       0 & 210 & -120 \\
       0 & 0 & 120
    \end{pmatrix},
  \]
  where $\Lambda_{\nu}=\diag(945,210,120)$. Applying this transition matrix
  to the $\mathcal{U}$-basis, one finally gets
  \[
    T_\nu \, \mathcal{U} =
    \begin{pmatrix}
      945\,y_1^4 + 1260\,y_1^3y_2 + 1350\,y_1^2y_2^2 + 1260\,y_1y_2^3 + 945\,y_2^4 \\
      210\,y_1^3y_2 + 300\,y_1^2y_2^2 + 210\,y_1y_2^3 \\
      120\,y_1^2y_2^2
    \end{pmatrix}
  \]
  and sees that the previous Monte-Carlo simulation delivered quite
  accurate results.
\end{example}

\subsection{Definition in differential geometry}

Good references for the material in this subsection are~\cite{Helgason,James1,Moakher}.
We first recall the Laplace-Beltrami operator on Riemannian manifolds.
\begin{defn}
On a Riemannian manifold $(M,g)$, the Laplace-Beltrami
operator on smooth functions $f\in C^{\infty}(M)$ is given by 
\[
\Delta f:=(\mathrm{div}\circ\mathrm{grad})f=\sum_{i,k=1}^{n}\frac{1}{\sqrt{G}}\partial_{k}\left(g^{ik}\sqrt{G}\partial_{i}f\right),
\]
where $n=\dim M$, $(g_{ij})_{n\times n}$ is the metric
matrix, and $G:=\det(g_{ij})$. 
\end{defn}
\begin{rem}
When $M=\mathbb{R}^{n}$ and $(g_{ij})=I_{n}$, the identity
matrix, we have the usual Laplace operator:
\[
\Delta f=\sum_{i=1}^{n}\frac{\partial^{2}f}{\partial x_{i}^{2}}.
\]
Namely, the Laplace-Beltrami operator is the generalization of the Laplace
operator on $\mathbb{R}^{n}$. 
\end{rem}
\begin{prop}
Given $X=HY\!H^T\!\in\SPD(m)$, for some orthogonal matrix $H\in O(n)$,
and $Y=\diag(y_{1},\ldots,y_{n})$ being the diagonalization of $X$, James
\cite[eq.~3.12]{James1} derived the Laplace-Beltrami operator as

\begin{equation}
\Delta=\sum_{i=1}^{m}\left(y_{i}^{2}\frac{\partial^{2}}{\partial y_{i}^{2}}-\frac{m-3}{2}y_{i}\frac{\partial}{\partial y_{i}}+\sum_{j=1,j\neq i}^{m}\frac{y_{i}^{2}}{y_{i}-y_{j}}\cdot\frac{\partial}{\partial y_{i}}\right). \label{eq:LaplaceBeltrami}
\end{equation}
\end{prop}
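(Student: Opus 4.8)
The plan is to start from the natural $GL(m)$-invariant Riemannian structure on $\SPD(m)$, whose line element at a point $X$ is $ds^{2}=\Tr\bigl((X^{-1}dX)^{2}\bigr)$; this is the metric for which James obtains \eqref{eq:LaplaceBeltrami}, and the normalization is exactly the one that makes the second-order part come out as $y_{i}^{2}\,\partial^{2}/\partial y_{i}^{2}$. Note that $\dim\SPD(m)=m(m+1)/2$ exceeds the number $m$ of eigenvalues, so the $y_{i}$ by themselves are not coordinates; the missing directions are supplied by the diagonalizing orthogonal matrix $H$.

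First I would introduce ``polar-type'' coordinates through the spectral decomposition $X=HY\!H^{T}$ with $H\in O(m)$ and $Y=\diag(y_{1},\dots,y_{m})$, valid on the open dense set where the eigenvalues are distinct (the locus of repeated eigenvalues is a null set and is irrelevant for a differential operator acting on smooth symmetric functions). Differentiating gives $dX=(dH)Y\!H^{T}+H(dY)H^{T}+HY(dH)^{T}$, and with $\Omega:=H^{T}dH$ the skew-symmetric matrix of Maurer--Cartan forms one gets $H^{T}(X^{-1}dX)H=Y^{-1}dY+(Y^{-1}\Omega Y-\Omega)$. The first summand is diagonal and the second has vanishing diagonal, so in $\Tr\bigl((X^{-1}dX)^{2}\bigr)$ the cross terms cancel and the metric block-decouples:
\[
  ds^{2}=\sum_{i=1}^{m}\frac{dy_{i}^{2}}{y_{i}^{2}}
  +2\sum_{i<j}\frac{(y_{i}-y_{j})^{2}}{y_{i}y_{j}}\,\Omega_{ij}^{2}.
\]
In particular the $(y_{i})$-block of the metric tensor is $\diag(y_{i}^{-2})$, so its inverse contributes $g^{y_{i}y_{i}}=y_{i}^{2}$.

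Next I would compute the Riemannian volume density $\sqrt{G}$. In any local chart on $O(m)$ the $H$-block of the metric is a fixed (i.e.\ $y$-independent) quadratic form times $\prod_{i<j}2(y_{i}-y_{j})^{2}/(y_{i}y_{j})$, so $\sqrt{G}=c(H)\cdot\prod_{i}y_{i}^{-(m+1)/2}\cdot\prod_{i<j}|y_{i}-y_{j}|$, where $c(H)$ depends only on the $O(m)$-coordinates and the exponent $-(m+1)/2$ comes from combining the $\prod_{i}y_{i}^{-1}$ of the $y$-block with $\prod_{i<j}(y_{i}y_{j})^{-1/2}$. Applying the coordinate formula $\Delta f=G^{-1/2}\,\partial_{k}\bigl(g^{ik}\sqrt{G}\,\partial_{i}f\bigr)$ to an $f$ depending only on the eigenvalues, and using that $\partial_{i}f=0$ in the $H$-directions while the metric is block-diagonal, everything but the $y$-derivatives drops and $\Delta f=G^{-1/2}\sum_{a}\partial_{y_{a}}\bigl(y_{a}^{2}\sqrt{G}\,\partial_{y_{a}}f\bigr)$. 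Expanding the product and using $\partial_{y_{a}}\log\sqrt{G}=-\frac{m+1}{2y_{a}}+\sum_{j\neq a}\frac{1}{y_{a}-y_{j}}$ (the factor $c(H)$ cancels), the coefficient of $\partial_{y_{a}}f$ equals $2y_{a}+y_{a}^{2}\,\partial_{y_{a}}\log\sqrt{G}=-\frac{m-3}{2}y_{a}+\sum_{j\neq a}\frac{y_{a}^{2}}{y_{a}-y_{j}}$, which is precisely \eqref{eq:LaplaceBeltrami}.

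The only genuinely delicate point is the bookkeeping on the $O(m)$ factor: one must verify that the $\Omega_{ij}$ with $i<j$ form a local coframe on $O(m)$ and that the Jacobian relating them to a set of coordinate differentials --- hence the factor $c(H)$ --- carries no $y$-dependence, so that it disappears when one takes $\partial_{y_{a}}\log\sqrt{G}$; one should also note that the repeated-eigenvalue locus where these coordinates degenerate is removable. Since the functions we care about (zonal polynomials, and more generally every element of $V_{n}$) are symmetric polynomials in $y_{1},\dots,y_{m}$, the reduced form derived above is all that is ever needed. The remaining steps are the routine matrix differentiation and logarithmic-derivative computation indicated above; alternatively one may simply cite James's original derivation, \cite[eq.~3.12]{James1}.
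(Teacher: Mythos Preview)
Your derivation is correct: the block decoupling of the metric under the spectral parametrization, the computation of $\sqrt{G}$ with the exponent $-(m+1)/2$ on $\prod_i y_i$, and the logarithmic-derivative bookkeeping all check out and lead exactly to \eqref{eq:LaplaceBeltrami}.

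However, note that the paper does not actually prove this proposition. It is stated as a citation of James's result \cite[eq.~3.12]{James1} and left without argument; the surrounding section is a survey of definitions, not a rederivation. So there is no ``paper's own proof'' to compare your approach against --- you have supplied a full proof where the paper gives none. Your route is the standard one (invariant metric on $\SPD(m)$, polar decomposition, radial part of the Laplacian), and it is essentially what James does in the cited reference. The only caveat is that you have computed the \emph{radial part} of $\Delta$, i.e., its action on $O(m)$-invariant functions; the full operator on $\SPD(m)$ has additional angular terms, but since the proposition is being used only on symmetric polynomials in the eigenvalues, this restriction is exactly what is wanted and you flag it correctly.
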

\begin{rem}
The second term on the right-hand side of \eqref{eq:LaplaceBeltrami} is, up to the constant $(m-3)/2$,
the Euler's operator $\sum_{i=1}^{m}y_{i}\frac{\partial}{\partial y_{i}}$,
which has all symmetric, homogeneous polynomials as its eigenfunctions.
Thus, when considering eigenfunctions of the Laplace-Beltrami operator,
it can be eliminated.
\end{rem}
\begin{defn}
The zonal polynomials $\mathcal{C}_{\lambda}(y_{1},\ldots,y_{m})$
are the eigenfunctions of the operator $\Delta_{Y}$, defined by
\[
  \Delta_{Y}:=\sum_{i=1}^{m}\left(y_{i}^{2}\frac{\partial^{2}}{\partial y_{i}^{2}}+\sum_{j=1,j\neq i}^{m}\frac{y_{i}^{2}}{y_{i}-y_{j}}\cdot\frac{\partial}{\partial y_{i}}\right).
\]
In particular,  for $\lambda=(\lambda_1,\ldots,\lambda_k)\in\mathcal{P}_n$, we have
\[
  \Delta_{Y}\mathcal{C}_{\lambda}(Y)=\bigl(\rho_{\lambda}+(m-1)n\bigr)\cdot\mathcal{C}_{\lambda}(Y),
\]
where
\begin{equation}\label{eq:RHO}
  \rho_{\lambda}:=\sum_{i=1}^{k}\lambda_{i}\left(\lambda_{i}-i\right).
\end{equation}
\end{defn}

\subsection{Definition through representation theory}

Consider the general linear group $G=\mathrm{GL}(m)$ on
$V_{n}$. Define a representation as follows. For $g\in\mathrm{GL}(m)$, $Y\in V_n$, and $\varphi\in\mathrm{GL}(V_n)$, 
\[
\left(g\circ\varphi\right)(Y):=\varphi{\left(g^{-1}Y(g^{-1})^{T}\right)}.
\]
As a representation, the linear space can be decomposed into invariant
subspaces \cite[p.~611]{Representation}
\[
V_{n}=\bigoplus_{\lambda\in\mathcal{P}_{n}}V_{\lambda}.
\]

\begin{defn}
Given $Y\in \SPD(m)$ and $\lambda\in\mathcal{P}_{n}$, define the zonal polynomials by
the projection
\begin{equation}
\mathcal{C}_{\lambda}(Y)=(\Tr Y)^{n}\Big|_{V_{\lambda}}.\label{eq:projection}
\end{equation}
\end{defn}
\begin{rem}
Note that \eqref{eq:projection} confirms \eqref{eq:TrZonal}. 
\end{rem}

\subsection{A short remark on Macdonald, Jack and zonal polynomials}\label{sec:Macdonald}

A limit case (by taking $t=q^\alpha$ and letting $q\rightarrow 1$) of the Macdonald polynomials gives the Jack polynomials $J_{\lambda}^{(\alpha)}$,
which when $\alpha=2$, gives the zonal polynomials $\mathcal{Z}_\lambda(Y)$. $\mathcal{Z}_\lambda(Y)$ differs from $\mathcal{C}_\lambda(Y)$ only by a constant factor. 

\section{Calculation of zonal polynomials}\label{sec:calc}

Although there are several ways to define the zonal polynomial $\mathcal{C}_{\lambda}(Y)$,
in practice, none of these definitions gives an algorithm or formula
to directly compute~$\mathcal{C}_{\lambda}(Y)$. Now, we
follow the steps by Muirhead \cite{Muirhead} to build up packages for the
calculation of~$\mathcal{C}_{\lambda}(Y)$.
\begin{defn}
For $\lambda=(\lambda_1,\ldots,\lambda_k)\in\mathcal{P}_{n}$,
define the \emph{monomial symmetric function} as 
\begin{equation}
  M_{\lambda}(y_{1},\ldots,y_{m}) =
  \sum_{\genfrac{}{}{0pt}{}{i_{1},\ldots,i_{k}}{\text{distinct terms}}}y_{i_{1}}^{\lambda_{1}}\cdots y_{i_{k}}^{\lambda_{k}}=y_{1}^{\lambda_{1}}\cdots y_{k}^{\lambda_{k}}+\text{symmetric terms}.\label{eq:MZonal}
\end{equation}
\end{defn}
\begin{rem}
An explicit expression of $M_{\lambda}(Y)$ is given by
\cite[eq.~6]{Takemura}:
\begin{equation}\label{eq:MZonalComputation}
  M_{(1^{m_{1}}2^{m_{2}}\cdots)}(Y)=\left(\prod_{j=1}^{n}\frac{1}{m_{j}!}\right)
  \sum_{i_{1},\ldots,i_{k}}y_{i_{1}}^{\lambda_{1}}\cdots y_{i_{k}}^{\lambda_{k}},
\end{equation}
where $\lambda=(\lambda_1,\ldots,\lambda_k)=(1^{m_1},2^{m_2},\ldots,n^{m_n})\in\mathcal{P}_n$ contains $m_1$ $1$'s, $m_2$ $2$'s, etc. 
\end{rem}
\begin{thm}
We have,  for some constants $c_{\kappa,\lambda}$, that (see, e.g., \cite[eq.~13]{Muirhead})
\begin{equation}\label{eq:CInTermsOfM}
  \mathcal{C}_{\kappa}(Y)=\sum_{\lambda\leq\kappa}c_{\kappa,\lambda}\,M_{\lambda}(Y).
\end{equation}
\end{thm}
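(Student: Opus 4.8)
The plan is to read off the triangular shape from the Wishart-based definition, where it is essentially already present, and then transport it across the change of basis from $\{\mathcal{U}_\lambda\}$ to $\{M_\lambda\}$. First I would use $\mathcal{C}_\kappa = d_\kappa\,\mathcal{Y}_\kappa$ together with $\mathcal{Y}=\Xi\,\mathcal{U}$, where $\Xi$ is upper triangular with respect to the lexicographic order. Since the $\mathcal{U}$-vector lists its partitions in decreasing lexicographic order, upper-triangularity of $\Xi$ says precisely that $\mathcal{Y}_\kappa$, and hence $\mathcal{C}_\kappa$, lies in the span of $\{\mathcal{U}_\lambda : \lambda \leq \kappa\}$. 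So it suffices to show that each $\mathcal{U}_\lambda$ itself expands lexicographically-triangularly in the monomial basis, $\mathcal{U}_\lambda = \sum_{\mu\leq\lambda} b_{\lambda\mu}M_\mu$; given that, the theorem follows by transitivity of $\leq$, with $c_{\kappa,\lambda} = d_\kappa\sum_{\lambda\leq\nu\leq\kappa}\Xi_{\kappa\nu}\,b_{\nu\lambda}$.

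For the remaining claim, write $u_\nu:=u_{\nu_1}\cdots u_{\nu_l}$ for a partition $\nu=(\nu_1,\dots,\nu_l)$. I would first observe that $\mathcal{U}_\lambda = u_1^{\lambda_1-\lambda_2}\cdots u_k^{\lambda_k} = u_{\lambda'}$, where $\lambda'$ is the conjugate partition: the partition having $\lambda_1-\lambda_2$ parts equal to $1$, then $\lambda_2-\lambda_3$ parts equal to $2$, and so on, has exactly $\lambda_j$ parts of size at least $j$, which is the definition of $\lambda'_j$. Then I would invoke the classical transition formula between elementary and monomial symmetric functions, $u_{\lambda'} = M_\lambda + \sum_{\mu\triangleleft\lambda} b_{\lambda\mu}M_\mu$, the sum over partitions strictly below $\lambda$ in dominance order; since $\mu\trianglelefteq\lambda$ implies $\mu\leq\lambda$ lexicographically, this is exactly the triangularity required. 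For a self-contained derivation one expands $\mathcal{U}_\lambda=\prod_r u_r^{m_r}$ into monomials and observes that the lexicographically maximal monomial is obtained by choosing $\{1,\dots,r\}$ in every factor $u_r$, giving exponent vector $\lambda$ with coefficient $1$, while every other choice produces a monomial whose sorted exponent vector is dominated by, hence lexicographically smaller than, $\lambda$.

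The one genuinely non-formal ingredient is this triangular expansion of a product of elementary symmetric functions in the monomial basis — a standard but combinatorial lemma; the reduction through $\Xi$, the identification $\mathcal{U}_\lambda = u_{\lambda'}$, and the coefficient bookkeeping are routine. I considered arguing instead directly from the Laplace--Beltrami characterization $\Delta_Y\mathcal{C}_\kappa = \bigl(\rho_\kappa + (m-1)n\bigr)\mathcal{C}_\kappa$ and the triangular action of $\Delta_Y$ on $\{M_\lambda\}$, but comparing leading coefficients there only yields $\rho_{\lambda_0} = \rho_\kappa$ for a dominance-maximal $\lambda_0$ in the support of $\mathcal{C}_\kappa$, and $\lambda\mapsto\rho_\lambda$ is not injective on $\mathcal{P}_n$ (e.g.\ $\rho_{(3,3)}=\rho_{(4,1,1)}=9$), so this would need extra input; routing through the Wishart construction, where triangularity is built into $\Xi$, avoids this.
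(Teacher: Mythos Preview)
The paper does not actually prove this theorem; it is stated with a citation to Muirhead and then used as input for the rest of the paper. So there is no ``paper's own proof'' to compare against. Your proposal, by contrast, supplies a genuine and correct argument: you read off from the Wishart definition that $\mathcal{C}_\kappa = d_\kappa(\Xi\mathcal{U})_\kappa$ with $\Xi$ upper triangular, so $\mathcal{C}_\kappa$ lies in the span of $\{\mathcal{U}_\lambda:\lambda\leq\kappa\}$; you then identify $\mathcal{U}_\lambda$ with the elementary symmetric product $e_{\lambda'}$ and invoke the standard dominance-triangular expansion $e_{\lambda'}=M_\lambda+\sum_{\mu\triangleleft\lambda}b_{\lambda\mu}M_\mu$, together with the implication from dominance to lexicographic order. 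That is exactly the right chain of reductions. One minor wording slip: in verifying $\mathcal{U}_\lambda=u_{\lambda'}$ you write that the constructed partition ``has exactly $\lambda_j$ parts of size at least $j$, which is the definition of $\lambda'_j$''; what this computation actually shows is that the conjugate of the constructed partition has $j$-th part $\lambda_j$, hence the conjugate is $\lambda$, hence the partition is $\lambda'$ --- the conclusion is right, but $\lambda'_j$ is not literally $\lambda_j$. Your closing remark about why the Laplace--Beltrami route is more delicate (the non-injectivity of $\lambda\mapsto\rho_\lambda$, illustrated by $\rho_{(3,3)}=\rho_{(4,1,1)}$) is well taken and matches the paper's own observation in the remark following the recurrence~\eqref{eq:CRec}.
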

\begin{rem}\label{rem:polyzero}
  Note that $M_\lambda(Y)$ is defined to be zero whenever there are fewer
  variables than parts in the partition~$\lambda$. It follows that
  $\mathcal{C}_{\kappa}(Y)=0$ if the dimension of~$Y$ is less than the
  number of parts of~$\kappa$.
\end{rem}
\begin{example}\label{Tables}
The following table, from \cite[p.~238]{Muirhead}, shows
the coefficients $c_{\kappa,\lambda}$, in the case $n=4$.
\[
\def\arraystretch{1.3}
\begin{array}{c@{\;\;}|@{\;\;}c@{\quad}c@{\quad}c@{\quad}c@{\quad}c}
\kappa\backslash\lambda & \left(4\right) & \left(3,1\right) & \left(2,2\right) & \left(2,1,1\right) & \left(1,1,1,1\right)\\ \hline
\left(4\right) & 1 & \frac{4}{7} & \frac{18}{35} & \frac{12}{35} & \frac{8}{35}\\
\left(3,1\right) & 0 & \frac{24}{7} & \frac{16}{7} & \frac{88}{21} & \frac{32}{7}\\
\left(2,2\right) & 0 & 0 & \frac{16}{5} & \frac{32}{15} & \frac{16}{5}\\
\left(2,1,1\right) & 0 & 0 & 0 & \frac{16}{3} & \frac{64}{5}\\
\left(1,1,1,1\right) & 0 & 0 & 0 & 0 & \frac{16}{5}
\end{array}
\]
\end{example}
\begin{thm}
The constant $c_{\kappa,\lambda}$ satisfies the recurrence \cite[eq.~14]{Muirhead}
\begin{equation}\label{eq:CRec}
  c_{\kappa,\lambda} =
  \sum_{\lambda<\mu\leq\kappa}\frac{\left(\lambda_{r}+t\right)-\left(\lambda_{s}-t\right)}{\rho_{\kappa}-\rho_{\lambda}}c_{\kappa,\mu},
\end{equation}
where the sum is over all $ \mu=(\lambda_1,\ldots,\lambda_{r-1},\lambda_r+t,\lambda_{r+1},\ldots,
  \lambda_{s-1},\lambda_s-t,\lambda_{s+1}, \allowbreak \ldots,\lambda_k)$, for $\lambda=\left(\lambda_1,\ldots,\lambda_k\right)$
and $t=1,\ldots,\lambda_s$ such that by rearranging the tuple~$\mu$
in a descending order, it lies as $\lambda<\mu\leq\kappa$. Recall the quantity
$\rho_\kappa$ that is defined in~\eqref{eq:RHO}.
\end{thm}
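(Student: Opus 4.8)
The statement to prove is the recurrence \eqref{eq:CRec} for the coefficients $c_{\kappa,\lambda}$.

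The plan is to exploit the eigenfunction characterization of the zonal polynomial from the differential-geometry section: $\Delta_Y \mathcal{C}_\kappa(Y) = (\rho_\kappa + (m-1)n)\,\mathcal{C}_\kappa(Y)$, where $\Delta_Y = \sum_{i}\bigl(y_i^2\partial_{y_i}^2 + \sum_{j\neq i}\tfrac{y_i^2}{y_i-y_j}\partial_{y_i}\bigr)$. Substituting the expansion $\mathcal{C}_\kappa(Y) = \sum_{\lambda\leq\kappa} c_{\kappa,\lambda} M_\lambda(Y)$ into both sides, I would need to know how $\Delta_Y$ acts on a single monomial symmetric function $M_\lambda$. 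The first main step is therefore to compute $\Delta_Y M_\lambda$ and re-expand it in the monomial basis: I expect a formula of the shape $\Delta_Y M_\lambda = \bigl(\rho_\lambda + (m-1)n\bigr) M_\lambda + \sum_{\mu > \lambda} a_{\lambda,\mu} M_\mu$, i.e.\ the monomial basis is ``upper-triangular'' for $\Delta_Y$ with diagonal entry matching the $\rho$-eigenvalue. The diagonal term comes from the $y_i^2\partial_{y_i}^2$ piece acting term-by-term (this is where $\rho_\lambda = \sum_i \lambda_i(\lambda_i - 1)$ up to the Euler correction reappears), while the off-diagonal contributions come from the rational part $\tfrac{y_i^2}{y_i-y_j}\partial_{y_i}$; expanding $\tfrac{y_i^2}{y_i-y_j}$ and collecting monomials is the computational heart of the argument, and the partitions $\mu$ that arise are exactly those obtained from $\lambda$ by moving $t$ boxes from row $s$ to row $r$, with coefficient $\pm\bigl((\lambda_r+t)-(\lambda_s-t)\bigr)$ after symmetrization.

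Granting that lemma, the second step is purely linear-algebraic bookkeeping. Plugging the expansion into $\Delta_Y \mathcal{C}_\kappa = (\rho_\kappa + (m-1)n)\mathcal{C}_\kappa$ and comparing the coefficient of $M_\lambda$ on both sides gives
\[
  \rho_\kappa\, c_{\kappa,\lambda}
  = \rho_\lambda\, c_{\kappa,\lambda} + \sum_{\lambda<\mu\leq\kappa} a_{\lambda,\mu}\, c_{\kappa,\mu},
\]
where the $(m-1)n$ terms cancel, and the sum ranges over $\mu>\lambda$ with $\mu\leq\kappa$ (coefficients $c_{\kappa,\mu}$ vanish for $\mu\not\leq\kappa$). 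Solving for $c_{\kappa,\lambda}$ — legitimate because $\rho_\kappa > \rho_\lambda$ whenever $\lambda < \kappa$ in the dominance/lexicographic order on partitions of the same $n$, a fact I would need to verify or cite — yields precisely \eqref{eq:CRec} once I confirm that $a_{\lambda,\mu} = (\lambda_r + t) - (\lambda_s - t)$ for the specific $\mu$ described in the statement.

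The main obstacle is the explicit computation of $\Delta_Y M_\lambda$ in the monomial basis, in particular handling the rational coefficients $\tfrac{y_i^2}{y_i - y_j}$: one must combine the $(i,j)$ and $(j,i)$ terms so that the pole at $y_i = y_j$ cancels and a polynomial remains, then track which shifted partitions $\mu$ appear and with what multiplicity after accounting for the combinatorial factors in \eqref{eq:MZonalComputation} that relate $M_\lambda$ to the raw power-sum-of-monomials. Care is needed with partitions $\mu$ that have repeated parts or that collapse when sorted, since these affect the normalization constants. I would also need the strict monotonicity $\rho_\lambda < \rho_\kappa$ for $\lambda < \kappa$ to justify dividing by $\rho_\kappa - \rho_\lambda$; this follows from a short argument comparing $\sum \lambda_i^2$ under a single box-move, or can be quoted from Muirhead. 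Everything else is routine coefficient extraction.
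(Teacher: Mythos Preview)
The paper does not supply its own proof of this theorem; it is quoted from Muirhead, so there is no in-paper argument to compare against. Your outline is in fact the standard derivation (James, then Muirhead): apply the Laplace--Beltrami eigenvalue equation to the monomial expansion of $\mathcal{C}_\kappa$, compute $\Delta_Y M_\lambda$ in the monomial basis, and match coefficients. The identification of the off-diagonal terms as ``move $t$ boxes from row $s$ to row $r$'' with weight $(\lambda_r+t)-(\lambda_s-t)$ is exactly right, and the cancellation of the $(m-1)n$ pieces is as you say.

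There is one genuine error. You write that dividing by $\rho_\kappa-\rho_\lambda$ is ``legitimate because $\rho_\kappa>\rho_\lambda$ whenever $\lambda<\kappa$,'' and list this as something to verify or cite. It is \emph{false} in the lexicographic order the paper uses: the remark immediately following the theorem gives the explicit counterexample $\kappa=(4,1,1)$, $\lambda=(3,3)$, where $\lambda<\kappa$ lexicographically but $\rho_\kappa=\rho_\lambda=9$. (In fact $(4,1,1)$ does not dominate $(3,3)$, so the relevant order here is really dominance, not lex; but even under dominance one must argue carefully.) What your eigenvalue computation actually yields is the undivided identity $(\rho_\kappa-\rho_\lambda)\,c_{\kappa,\lambda}=\sum_\mu\bigl[(\lambda_r+t)-(\lambda_s-t)\bigr]c_{\kappa,\mu}$, which is always valid; the stated recurrence is the divided form and is only meaningful when $\rho_\kappa\neq\rho_\lambda$. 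The paper flags this degeneracy explicitly and observes that in the known bad cases the right-hand sum is also empty, so one gets $0/0$ and must determine $c_{\kappa,\lambda}$ by other means. Your write-up should therefore drop the claimed inequality and instead state the undivided identity as the real conclusion, noting the recurrence as a corollary when the denominator is nonzero.
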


\begin{rem}
As mentioned in Remark 1 on page 73 of  \cite{Takemura}, recurrence \eqref{eq:CRec} fails when $\rho_\kappa-\rho_\lambda=0$, which first occurs when $\kappa=(4,1,1)$ and $\lambda=(3,3)$: 
\[
\rho_\kappa=4\cdot(4-1)+1\cdot(1-2)+1\cdot(1-3)=9=3\cdot(3-1)+3\cdot(3-2)=\rho_\lambda.
\]
In this case, not only does the denominator in \eqref{eq:CRec} vanish, but also the summation is empty. 
It seems that James \cite{James1} has claimed that for all ``relevant'' pairs $(\kappa,\lambda)$, $c_{\kappa,\lambda}>0$. Due to the nonnegative numerator of the summation in \eqref{eq:CRec}, it suggests that if $\rho_\kappa\leq\rho_\lambda$,  $c_{\kappa,\lambda}=0$. 

See the example at the end of Section \ref{sec:Package} of the case $\kappa=(4,1,1)$ and $\lambda=(3,3)$, which is also compatible with the built-in package in SageMath.
\end{rem}

Once the initial value $c_{\kappa,\kappa}$ is given,
\eqref{eq:CRec} can compute $c_{\kappa,\lambda}$ for all $\lambda<\kappa$.
Now, observing the tables in Example~\ref{Tables} and recalling \eqref{eq:TrZonal},
it is easy to see that the sum of each column is given by a multinomial
coefficient. More precisely, let
$\lambda=(\lambda_1,\ldots,\lambda_k)\in\mathcal{P}_{n}$,
\begin{equation}
  \sum_{\kappa=\lambda}^{(n)}c_{\kappa,\lambda} =
  \binom{n}{\lambda_1,\ldots,\lambda_k}.\label{eq:CInitial}
\end{equation}
In particular, $c_{(n),(n)}=\binom{n}{n}=1$.
Thus, all constants $c_{\kappa,\lambda}$ are obtained, and so is $\mathcal{C}_{\lambda}(Y)$.

\section{Characterization of vanishing coefficients}\label{sec:zeros}

Muirhead~\cite[Lem.~7.2.3]{Muirhead} gives a necessary condition for some
coefficients $c_{\kappa,\lambda}$ to be zero, but without a proof. We recall his
result here and give a simple proof of it.
\begin{lemma}\label{lem:zero1}
  Let $n\in\mathbb{N}$ and $\kappa,\lambda\in\mathcal{P}_n$. If $\kappa$
  has more parts than~$\lambda$, then $c_{\kappa,\lambda}=0$.
\end{lemma}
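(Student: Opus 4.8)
The plan is to prove the statement by shrinking the number of variables. The crucial observation is that the coefficients $c_{\kappa,\lambda}$ in the expansion~\eqref{eq:CInTermsOfM} are universal: they are determined by the recurrence~\eqref{eq:CRec} together with the normalization~\eqref{eq:CInitial}, neither of which refers to the dimension~$m$. Hence the same numbers $c_{\kappa,\lambda}$ describe $\mathcal{C}_{\kappa}$ no matter how many variables are used, so I am free to evaluate~\eqref{eq:CInTermsOfM} in a conveniently small number of variables and then apply Remark~\ref{rem:polyzero}.

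Concretely, let $k$ be the number of parts of~$\kappa$. Since every partition of~$n$ has at least one part, the hypothesis that $\kappa$ has more parts than~$\lambda$ forces $k\ge 2$, so $m:=k-1$ is a legitimate (positive) number of variables. On the one hand, Remark~\ref{rem:polyzero} gives $\mathcal{C}_{\kappa}(y_1,\dots,y_m)\equiv 0$, since $\kappa$ has $k>m$ parts. On the other hand, expanding that same polynomial via~\eqref{eq:CInTermsOfM}, and using Remark~\ref{rem:polyzero} once more to discard every term $M_{\mu}$ whose index~$\mu$ has more than $m$ parts, one obtains
\[
  0 \;=\; \sum_{\mu} c_{\kappa,\mu}\, M_{\mu}(y_1,\dots,y_m),
\]
where $\mu$ runs over the partitions of~$n$ with $\mu\le\kappa$ and at most $k-1$ parts. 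Now I would invoke the classical fact that the monomial symmetric polynomials $M_{\mu}$ indexed by partitions of~$n$ with at most $m$ parts are linearly independent as polynomials in $y_1,\dots,y_m$ (they form a basis of the space of symmetric homogeneous polynomials of degree~$n$ in these variables). It follows that every coefficient in the display vanishes, that is, $c_{\kappa,\mu}=0$ for all $\mu\le\kappa$ with at most $k-1$ parts. Since by hypothesis $\lambda$ has strictly fewer than $k$ parts, the case $\mu=\lambda$ gives $c_{\kappa,\lambda}=0$; and if $\lambda\not\le\kappa$, then $c_{\kappa,\lambda}$ does not occur in~\eqref{eq:CInTermsOfM} at all and is zero by convention.

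I do not anticipate a genuine obstacle. The only two points deserving an explicit word are: (i) the dimension-independence of the $c_{\kappa,\lambda}$, which is what legitimizes the passage to $m=k-1$ variables and is immediate from the shape of~\eqref{eq:CRec} and~\eqref{eq:CInitial}; and (ii) the linear independence of the relevant monomial symmetric polynomials, which is classical and can simply be cited. If one prefers to avoid the dimension-independence remark, the same argument can be phrased inside the ring of symmetric functions, specializing to $k-1$ variables only at the end.
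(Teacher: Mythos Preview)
Your proof is correct and follows essentially the same route as the paper: specialize to $k-1$ variables so that $\mathcal{C}_\kappa$ vanishes identically, then use that the surviving monomial symmetric polynomials are linearly independent to force the coefficients $c_{\kappa,\lambda}$ with $\operatorname{len}(\lambda)<k$ to be zero. You are simply more explicit than the paper about the linear-independence step and the dimension-independence of the $c_{\kappa,\lambda}$, but the argument is the same.
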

\begin{proof}
  Given a partition $\lambda$, denote by $\operatorname{len}(\lambda)$ its
  number of parts. Assume that $\operatorname{len}(\kappa)=k$. Then, the
  definition of zonal polynomials \eqref{eq:CInTermsOfM} and
  Remark~\ref{rem:polyzero}, we see that
\begin{align*}
 0 &= \mathcal{C}_{\kappa}(y_1,\dots,y_{k-1})\\
    &= 
  \!\!\!\!\!\sum_{\textstyle\genfrac{}{}{0pt}{}{\lambda\leq\kappa}{\operatorname{len}(\lambda)<k}}\!\!\!\!\!
      c_{\kappa,\lambda} \underbrace{M_{\lambda}(y_1,\dots,y_{k-1})}_{\neq0} \quad +
   \sum_{\textstyle\genfrac{}{}{0pt}{}{\lambda\leq\kappa}{\operatorname{len}(\lambda)\geq k}}\!\!\!\!\!
    c_{\kappa,\lambda} \underbrace{M_{\lambda}(y_1,\dots,y_{k-1})}_{=0}.
\end{align*}
  Now, it is apparent that all coefficients $c_{\kappa,\lambda}$
  for which $\lambda$ has fewer parts than~$k$ must be zero.
\end{proof}

While Lemma~\ref{lem:zero1} only gives a necessary condition under which
$c_{\kappa,\lambda}$ is zero, we would like to obtain a full characterization,
i.e., a necessary and sufficient condition for $c_{\kappa,\lambda}=0$. For
example, we have $c_{(8,2,2),(7,4,1)} = 0$, although both partitions have the
same length.
\begin{thm}\label{thm:zero2}
  Let $n\in\mathbb{N}$ and let $\kappa, \lambda\in\mathcal{P}_n$  with $\kappa\geq\lambda$ in lexicographic order. Then $c_{\kappa,\lambda}=0$
  if and only if there exists $p\in\mathbb{N}$ such that,  (where the partitions are filled with zeros as necessary,)
  \begin{equation}\label{eq:prop}
    \sum_{i=1}^p (\kappa_i - \lambda_i) < 0.
  \end{equation}
\end{thm}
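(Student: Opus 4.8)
The plan is to reformulate condition \eqref{eq:prop} as the statement that $\lambda$ is \emph{not} dominated by $\kappa$, and then to run a downward induction along the lexicographic order using Muirhead's recurrence \eqref{eq:CRec}. Write $\lambda\preceq\kappa$ for the dominance order, meaning $\sum_{i=1}^p\lambda_i\le\sum_{i=1}^p\kappa_i$ for every $p$ (partitions padded by zeros); then \eqref{eq:prop} holds for $(\kappa,\lambda)$ exactly when $\lambda\not\preceq\kappa$. Since dominance refines the lexicographic order, the hypothesis $\kappa\ge\lambda$ (lex) leaves only two possibilities: either $\lambda\preceq\kappa$ (so \eqref{eq:prop} fails), or $\lambda$ and $\kappa$ are $\preceq$-incomparable (so \eqref{eq:prop} holds). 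I would prove the two implications separately, each by induction on the lexicographic position of $\lambda$, with base case $\lambda=\kappa$.

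For the ``if'' direction the key observation is that every intermediate partition $\mu$ occurring on the right-hand side of \eqref{eq:CRec} still satisfies \eqref{eq:prop} whenever $\lambda$ does. Indeed, each such $\mu$ is obtained from $\lambda$ by moving $t\ge1$ boxes from some row $s$ to an earlier row $r<s$ and then re-sorting into weakly decreasing order; moving boxes leftward can only increase partial sums, and re-sorting can only increase them further (the sum of the $p$ largest entries is at least the sum of any $p$ entries). Hence $\sum_{i\le p}\mu_i\ge\sum_{i\le p}\lambda_i$ for all $p$, so $\sum_{i\le p}(\kappa_i-\lambda_i)<0$ forces $\sum_{i\le p}(\kappa_i-\mu_i)<0$. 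The induction is then immediate: $\lambda=\kappa$ never satisfies \eqref{eq:prop}, so the base case is vacuous; and for $\lambda<\kappa$ satisfying \eqref{eq:prop}, either $\rho_\kappa=\rho_\lambda$ — the degenerate situation of the remark after \eqref{eq:CRec}, in which $c_{\kappa,\lambda}=0$ is the value consistent with that remark and with the software — or \eqref{eq:CRec} applies and writes $c_{\kappa,\lambda}$ as a combination of values $c_{\kappa,\mu}$ with $\mu>\lambda$ (lex), each of which satisfies \eqref{eq:prop} and hence vanishes by the inductive hypothesis.

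For the ``only if'' direction I would prove the contrapositive: if $\lambda\preceq\kappa$ then $c_{\kappa,\lambda}>0$, in particular $c_{\kappa,\lambda}\ne0$. The base case $\lambda=\kappa$ uses that the seed $c_{\kappa,\kappa}$ is positive — this is classical, being an explicit product of strictly positive factors (compare the formula for $d_\lambda$), see \cite{Muirhead}. For $\lambda<\kappa$ with $\lambda\prec\kappa$ strictly, I would first check $\rho_\kappa>\rho_\lambda$ (so the denominator in \eqref{eq:CRec} is positive): writing $\rho_\nu=\sum_i\nu_i^2-\sum_i i\,\nu_i$ as in \eqref{eq:RHO}, the first sum strictly increases under a strict dominance step (strict convexity of $x\mapsto x^2$), while $\sum_i i\,\nu_i=\sum_{p\ge0}\bigl(n-\sum_{i\le p}\nu_i\bigr)$ strictly decreases, so $\lambda\prec\kappa$ gives $\rho_\kappa>\rho_\lambda$. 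The numerators $(\lambda_r+t)-(\lambda_s-t)=\lambda_r-\lambda_s+2t$ in \eqref{eq:CRec} are positive since $r<s$ and $t\ge1$. Hence every term on the right has the sign of its $c_{\kappa,\mu}$, which by the ``if'' direction is $0$ when \eqref{eq:prop} holds for $\mu$ and by the inductive hypothesis is $>0$ otherwise; so all terms are $\ge0$. It remains to exhibit one intermediate $\mu$ with $c_{\kappa,\mu}>0$: since $\lambda\prec\kappa$, there is a partition $\mu$ covering $\lambda$ in the dominance order with $\mu\preceq\kappa$, and by Brylawski's description of the covers, $\mu$ arises from $\lambda$ by moving a single box from one row to an earlier row. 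This $\mu$ is an admissible intermediate in \eqref{eq:CRec} (take $t=1$; $\mu$ is already a partition), it lies strictly above $\lambda$ and weakly below $\kappa$ in lex order (dominance refines it), and — being $\preceq\kappa$ — it has $c_{\kappa,\mu}>0$ by the inductive hypothesis (or equals $\kappa$, covered by the base case). Therefore $c_{\kappa,\lambda}>0$.

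The hard part will be this last step of the ``only if'' direction: ruling out cancellation among the nonnegative terms of \eqref{eq:CRec}. This requires both the positivity of the seed $c_{\kappa,\kappa}$ and, for each $\lambda\prec\kappa$, a \emph{single} admissible box-move that lands strictly above $\lambda$ yet still weakly below $\kappa$ in dominance. The latter is exactly the statement that the covers of $\lambda$ in the dominance lattice are realized by elementary box moves; its subtlety lies in runs of equal parts of $\lambda$, for which the naive choice ``move the last box up to the first row where $\lambda$ trails $\kappa$'' can fail to produce a partition or can overshoot $\kappa$, and Brylawski's analysis of the dominance lattice is what handles this cleanly. The remaining ingredients — rephrasing \eqref{eq:prop} as non-dominance, the monotonicity of partial sums under the recurrence's box moves, and the inequality $\rho_\kappa>\rho_\lambda$ — are routine.
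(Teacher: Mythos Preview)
Your approach mirrors the paper's: both directions proceed by induction via \eqref{eq:CRec}, showing that the box-moves in the recurrence preserve the (non-)dominance condition, that $\lambda\prec\kappa$ forces $\rho_\kappa>\rho_\lambda$, and that a single admissible move from $\lambda$ toward $\kappa$ exists to secure positivity. The execution differs---you argue monotonicity of partial sums abstractly, deduce $\rho_\kappa>\rho_\lambda$ from Schur-convexity of $\sum\nu_i^2$ together with the tail-sum identity for $\sum i\,\nu_i$, and invoke Brylawski's description of dominance covers; the paper instead does a five-case split on where $p$ falls relative to the reordered positions, uses Abel summation for the $\rho$-inequality, and gives the explicit recipe ``take $r$ minimal with $\sum_{i\le r}(\kappa_i-\lambda_i)>0$ and move one box from row $r{+}1$ to row~$r$''. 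Your route is more conceptual and imports a structural lemma; the paper's is self-contained and more computational, but the overall strategy is the same.

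One point deserves comment: the degenerate case you flag, $\rho_\kappa=\rho_\lambda$ with \eqref{eq:prop} holding, is a genuine soft spot shared with the paper's proof. It does occur with $\operatorname{len}(\lambda)=\operatorname{len}(\kappa)$---for instance $\kappa=(5,2,2,1)$, $\lambda=(4,4,1,1)$---so neither Lemma~\ref{lem:zero1} nor the ``empty sum'' base case applies, and \eqref{eq:CRec} yields only $0=0$. You are right to call this out rather than gloss over it; a clean resolution is to take the dominance-triangularity of zonal (Jack) polynomials as input, since that is established directly from the action of $\Delta_Y$ on the $M_\lambda$-basis rather than from the recurrence.
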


\begin{proof}
  We first want to show that, under the assumption~\eqref{eq:prop},
  $\sum_{i=1}^p(\kappa_i-\mu_i)<0$ holds for all $\mu$ of the form
  $\mu=(\dots,\lambda_r+t,\dots,\lambda_s-t,\dots)$, $1\leq t\leq \lambda_s$,
  after reordering the parts. Assume that this reordering requires us
  to move the part $\lambda_r+t$ to the $k$-th position ($k\leq r$) and
  the part $\lambda_s-t$ to the $\ell$-th position ($\ell\geq s$). Then
  the partition~$\mu$ has the following form:
  \[
    (\lambda_1,\dots,\lambda_{k-1},\lambda_r+t,\lambda_k,\dots,
    \lambda_{r-1},\lambda_{r+1},\dots,\lambda_{s-1},\lambda_{s+1},\dots,
    \lambda_{\ell},\lambda_s-t,\lambda_{\ell+1},\dots).
  \]
  Then we have
  \[
    \sum_{i=1}^p \mu_i =
    \begin{cases}
      \sum_{i=1}^p \lambda_i, & \text{if } p<k; \\
      \sum_{i=1}^{p-1} \lambda_i + \lambda_r+t, & \text{if } k\leq p<r; \\
      \sum_{i=1}^p \lambda_i + t, & \text{if } r\leq p<s; \\
      \sum_{i=1}^{s-1} \lambda_i + t + \sum_{i=s+1}^{p+1} \lambda_i, & \text{if } s\leq p<\ell; \\
      \sum_{i=1}^p \lambda_i, & \text{if } p\geq\ell.
    \end{cases}
  \]
  In each of these cases, we claim $\sum_{i=1}^p \mu_i \geq \sum_{i=1}^p
  \lambda_i$.  For the $1$st, $3$rd and $5$th case, it is immediately
  obvious. For the $2$nd and $4$th case, it is true because
  $\lambda_r+t\geq\lambda_p$ and $\lambda_{p+1}\geq\lambda_s-t$, respectively.

  We have just shown that if $\kappa$ and $\lambda$ satisfy~\eqref{eq:prop},
  then all the $\mu$'s in~\eqref{eq:CRec} also satisfy~\eqref{eq:prop}, in
  other words, $c_{\kappa,\lambda}$ is a linear combination of
  $c_{\kappa,\mu}$'s with the same property. In order to conclude the proof by
  induction, we have to investigate the two possible base cases:
  \begin{enumerate}
  \item We arrive at a $c_{\kappa,\lambda}$ for which $\lambda$ has fewer
    parts than~$\kappa$ (, note that the operation
    $(\dots,\lambda_r+t,\dots,\lambda_s-t,\dots)$ weakly decreases the number
    of parts). Then by Lemma~\ref{lem:zero1}, $c_{\kappa,\lambda}=0$.
  \item We arrive at a $c_{\kappa,\lambda}$ such that no suitable $\mu$
    between $\lambda$ and $\kappa$ exists.  Also in this case we get
    $c_{\kappa,\lambda}=0$ since the sum in~\eqref{eq:CRec} is empty.
  \end{enumerate}

  Now we show the converse: if $\kappa,\lambda$ do not satisfy
  \eqref{eq:prop}, then by \eqref{eq:CRec}, $c_{\kappa,\lambda}$ is a linear
  combination (with nonnegative coefficients) of $c$'s that do not satisfy
  \eqref{eq:prop} either.  Note that $\rho_\kappa<\rho_\lambda$ only occurs if
  \eqref{eq:prop} holds, which can be proven as follows. Let $\kappa=(\kappa_{1},\ldots,\kappa_{m})$ and $\lambda=(\lambda_{1},\ldots,\lambda_{m})$, with possible zero components as necessary. For any constant $c$, 
\begin{align*}
\rho_{\kappa}-\rho_{\lambda} & =\sum_{i=1}^{m}\left(\kappa_{i}(\kappa_{i}-i)-\lambda_{i}(\lambda_{i}-i)\right)\\
 & =\sum_{i=1}^{m}(\kappa_{i}-\lambda_{i})(\kappa_{i}+\lambda_{i}-i)\\
 & =\sum_{i=1}^{m}(\kappa_{i}-\lambda_{i})(\kappa_{i}+\lambda_{i}-i+c)-c\sum_{i=1}^{m}(\kappa_{i}-\lambda_{i})\\
 & = \sum_{i=1}^{m}(\kappa_{i}-\lambda_{i})(\kappa_{i}+\lambda_{i}-i+c),
\end{align*}
due to that 
\[
\sum_{i=1}^{m}(\kappa_{i}-\lambda_{i})=\sum_{i=1}^{m}\kappa_{i}-\sum_{i=1}^{k}\lambda_{i}=n-n=0.
\]
Let $c=m+1$ and define $a_i=\kappa_{i}+\lambda_{i}+(m+1-i)$ for $i=1,2,\ldots,m$, which are positive and strictly decreasing. If $\rho_\kappa<\rho_\lambda$ but \eqref{eq:prop}  fails, i.e., for any $p\in\mathbb{N}$, $\sum_{i=1}^p(\kappa_i-\lambda_i)\geq 0$, we see the contradiction as follows.
\begin{align*}
\rho_{\kappa}-\rho_{\lambda} &= \sum_{i=1}^{m}(\kappa_{i}-\lambda_{i})a_{i}\\
  &= a_{m}\sum_{i=1}^{m}(\kappa_{i}-\lambda_{i})+(a_{m-1}-a_{m})\sum_{i=1}^{m-1}(\kappa_{i}-\lambda_{i})+\cdots\\
  &\qquad +(a_{1}-a_{2})(\kappa_{1}-\lambda_{1})\geq 0.
\end{align*}
Thus, we only focus on the case that $\rho_\kappa>\rho_\lambda$. It is not difficult to see that $\lambda$ can be
  converted into~$\kappa$ by a finite sequence of moves
  $(\dots,\lambda_r+1,\lambda_{r+1}-1,\dots)$, where $r$ is the smallest index
  such that $\sum_{i=1}^r (\kappa_i - \lambda_i) > 0$, and all partitions in
  this sequence do not satisfy~\eqref{eq:prop}. The recursion ends when there
  is no such~$r$, i.e., when $\lambda=\kappa$, and one sees that the
  contribution of $c_{\kappa,\kappa}\neq0$ makes $c_{\kappa,\lambda}$ positive.
\end{proof}

\begin{example}
  For $\kappa=(8,2,2)$ and $\lambda=(7,4,1)$ we verify that the second
  partial sum of their differences, i.e., $p=2$ is negative:
  $(8-7)+(2-4)=-1$, and hence $c_{\kappa,\lambda}$ must be zero.
\end{example}

A graphical presentation of the positions where $c_{\kappa,\lambda}=0$ is
shown in Figure~\ref{fig:zeros}. We recognize a fractal-like structure which
is due to Theorem~\ref{thm:zero2} and the lexicographical ordering of
partitions.

\begin{figure}
  \begin{center}
    \includegraphics[width=0.56\textwidth]{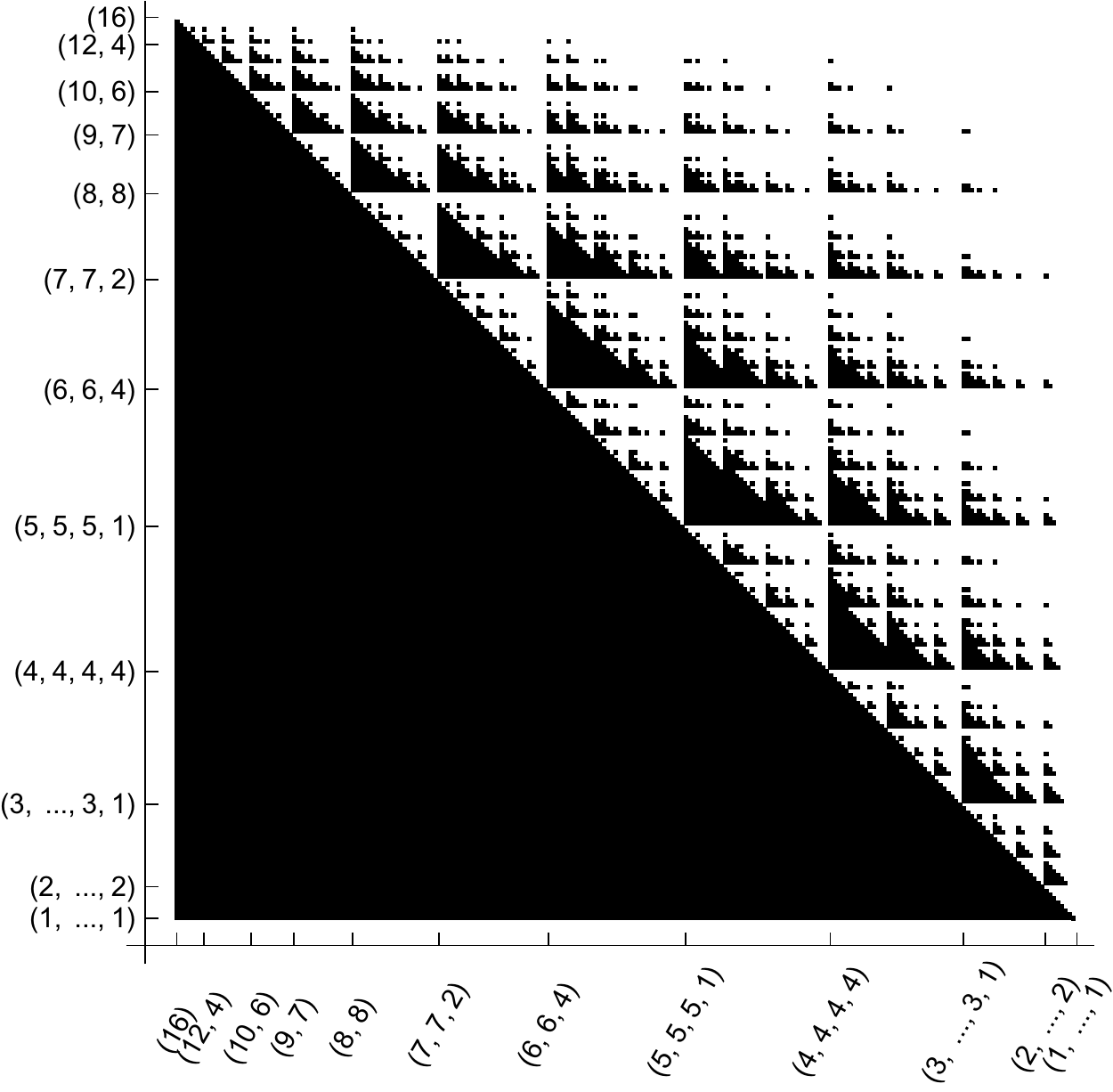}
  \end{center}
  \caption{Location of zeros in the $c_{\kappa,\lambda}$-matrix for $n=16$:
  each nonzero coefficient is represented by a white square, each zero
  by a black square.}
  \label{fig:zeros}
\end{figure}

\section{Infinite families of coefficients $c_{\kappa,\lambda}$}
\label{sec:families}

In this section, we study some infinite families among the coefficients
$c_{\kappa,\lambda}$ of the zonal polynomials~$\mathcal{C}_{\kappa}(Y)$. Since we have
seen that $c_{(n),(n)}=1$ for all~$n$ (that corresponds to the upper left
corner of the $c_{\kappa,\lambda}$-matrix), one can ask whether the
``neighboring'' entries also admit a closed form for general~$n$.
Theorem~\ref{thm:fam1} will give an explicit answer for the first few cases.
Before that, we focus on the first row of the $c_{\kappa,\lambda}$-matrix,
i.e., on the coefficients $c_{\kappa,\lambda}$ for which
$\kappa=(n)$ and $\lambda\leq\kappa$ has two parts.

\begin{thm}\label{thm:row1}
Let $(n-m,m)\in\mathcal{P}_n$, then
\begin{equation}\label{eq:1to2}
  c_{(n),(n-m,m)} =
  \binom{n}{m}\cdot\frac{\bigl(\frac12\bigr)_m}{\bigl(n-m+\frac12\bigr)_m}.
\end{equation}
\end{thm}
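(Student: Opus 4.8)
The plan is to run the recurrence~\eqref{eq:CRec} in the special case $\kappa=(n)$, $\lambda=(n-m,m)$, convert it into a one-dimensional recurrence in the parameter~$m$, and solve that recurrence explicitly. Because $\lambda$ has only two parts, the single admissible index pair in~\eqref{eq:CRec} is $(r,s)=(1,2)$, so the partitions~$\mu$ occurring on the right-hand side are precisely $\mu_t=(n-m+t,\,m-t)$ for $t=1,\dots,m$. One verifies that each $\mu_t$ is already weakly decreasing (since $n-m+t>n-m\ge m\ge m-t$, using $m\le n/2$), that $\lambda<\mu_t$ because their first parts obey $n-m+t>n-m$, and that $\mu_t\le(n)$ trivially; hence no reordering is needed, every $t\in\{1,\dots,m\}$ contributes exactly once, and the term $t=m$ corresponds to the one-part partition $(n)$. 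Writing $a_m:=c_{(n),(n-m,m)}$ and $a_0:=c_{(n),(n)}=1$, the recurrence~\eqref{eq:CRec} reads
\[
  \bigl(\rho_{(n)}-\rho_{(n-m,m)}\bigr)\,a_m=\sum_{t=1}^m\bigl((n-m+t)-(m-t)\bigr)\,a_{m-t}.
\]

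Next I would evaluate the $\rho$-difference from~\eqref{eq:RHO}. A short expansion gives $\rho_{(n)}=n(n-1)$ and $\rho_{(n-m,m)}=(n-m)(n-m-1)+m(m-2)$, so that $\rho_{(n)}-\rho_{(n-m,m)}=m(2n-2m+1)$, which is strictly positive for $1\le m\le n/2$; in particular the denominator in~\eqref{eq:CRec} never vanishes here. Re-indexing the sum by $j=m-t$ (so that the numerator $n-m+t-(m-t)$ becomes $n-2j$), the recurrence takes the compact form
\[
  m(2n-2m+1)\,a_m=\sum_{j=0}^{m-1}(n-2j)\,a_j\qquad(m\ge1),\qquad a_0=1.
\]

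To solve it, I would subtract the instance of this identity at $m-1$ from the one at $m$; the sums cancel, and after noting that $(m-1)(2n-2m+3)+(n-2m+2)$ factors as $(2m-1)(n-m+1)$, one is left with the two-term recurrence
\[
  m(2n-2m+1)\,a_m=(2m-1)(n-m+1)\,a_{m-1}\qquad(m\ge1),
\]
whose $m=1$ case is checked directly against the summed form. Telescoping then yields
\[
  a_m=\prod_{j=1}^m\frac{(2j-1)(n-j+1)}{j\,(2n-2j+1)},
\]
and using $\prod_{j=1}^m(2j-1)=2^m\bigl(\tfrac12\bigr)_m$, $\prod_{j=1}^m(n-j+1)=n!/(n-m)!$, $\prod_{j=1}^m j=m!$, and $\prod_{j=1}^m(2n-2j+1)=2^m\bigl(n-m+\tfrac12\bigr)_m$, the powers of~$2$ cancel and one obtains exactly~\eqref{eq:1to2}.

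The computation is essentially routine; the only points that require care are the combinatorial bookkeeping in the first step (ensuring the list of $\mu$'s is complete, each counted once, and that the $t=m$ term is treated as the partition~$(n)$ rather than~$(n,0)$) and the elementary algebra of the $\rho$-difference and of the factorization $(m-1)(2n-2m+3)+(n-2m+2)=(2m-1)(n-m+1)$. I do not anticipate a genuine obstacle beyond these; the ``trick'' of differencing the summed recurrence to obtain a first-order one is what makes the closed form fall out cleanly.
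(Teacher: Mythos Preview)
Your proof is correct and follows essentially the same route as the paper: both specialize~\eqref{eq:CRec} to $\kappa=(n)$, compute $\rho_{(n)}-\rho_{(n-m,m)}=m(2n-2m+1)$, and reduce the resulting summed recurrence to the first-order relation $m(2n-2m+1)\,a_m=(2m-1)(n-m+1)\,a_{m-1}$ before passing to the closed form. The only cosmetic difference is that the paper phrases this reduction as an induction step (substituting the $m$-instance of the recurrence into the $(m+1)$-instance), whereas you obtain it by differencing consecutive instances---these are the same manipulation.
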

\begin{proof}
Obviously, 
$ \rho_{(n)}-\rho_{(n-m,m)}=m(2n-2m+1)$. By \eqref{eq:CRec},
\begin{align*}
  c_{(n),(n-m,m)}
  &= \sum_{(n-m,m)<\mu\leq(n)}\frac{(n-m+t)-(m-t)}
    {\rho_{(n)}-\rho_{(n-m,m)}}\cdot c_{(n),\mu} \\
  &= \frac{1}{m(2n-2m+1)}\sum_{t=1}^{m}(n-2m+2t)c_{(n),\mu}.
\end{align*}
Now, we proceed by induction on~$m$. When $m=1$, the only term in the sum is $t=1$, i.e., $\mu=(n)$,  so that
\begin{equation}\label{eq:cnton1}
  c_{(n),(n-1,1)}=\frac{1}{2n-1}\cdot(n-2+2)\cdot1=\frac{n}{2n-1}=\binom{n}{1}\frac{\frac{1}{2}}{n-\frac{1}{2}}.
\end{equation}
Now, assume \eqref{eq:1to2} holds for $m$. Considering the case $m+1$, we have
\begin{align*}
  & c_{(n),(n-m-1,m+1)} = \\
  &= \frac{1}{(m+1)(2(n-m)-1)}\sum_{t=1}^{m+1}(n-2m-2+2t)c_{(n),\mu} \\
  &= \frac{1}{(m+1)2(n-m)-1)}\biggl((n-2m)c_{(n),(n-m,m)}+\sum_{t=1}^{m}(n-2m+2t)c_{(n),\mu}\biggr) \\
  &= \frac{c_{(n),(n-m,m)}}{(m+1)(2(n-m)-1)}\Bigl((n-2m)+m(2(n-m)+1)\Bigr) \\
  &= \frac{(n-m)(2m+1)}{(m+1)(2(n-m)-1)}\cdot\binom{n}{m}\frac{\bigl(\frac12\bigr)_m}{\bigl(n-m+\frac12\bigr)_m}\allowdisplaybreaks \\
  &= \binom{n}{m+1}\cdot\frac{\bigl(\frac12\bigr)_{m+1}}{\bigl(n-m-\frac12\bigr)_{m+1}}.\qedhere
\end{align*}
\end{proof}

\begin{thm}
\label{thm:fam1}
We have
\begin{alignat*}{2}
  c_{(n-1,1),(n-1,1)} &= \frac{2n(n-1)}{2n-1}, &\qquad& (n\geq2);\\
  c_{(n-1,1),(n-2,2)} &= \frac{2n(n-1)(n-2)}{(2n-1)(2n-5)}, && (n\geq4); \\
  c_{(n-2,2),(n-2,2)} &= \frac{2n(n-1)(n-2)(n-3)}{(2n-3)(2n-5)}, &&(n\geq4).
\end{alignat*}
\end{thm}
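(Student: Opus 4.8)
The plan is to treat the three formulas in Theorem~\ref{thm:fam1} in the order listed, using the recurrence~\eqref{eq:CRec} together with the column-sum identity~\eqref{eq:CInitial}, which for a two-part partition $\lambda=(a,b)$ reads $\sum_{\kappa=\lambda}^{(n)} c_{\kappa,\lambda}=\binom{n}{a,b}=\binom{n}{b}$. For the entry $c_{(n-1,1),(n-1,1)}$, I would exploit that in the lexicographic order the only partitions $\kappa\geq(n-1,1)$ are $(n)$ and $(n-1,1)$ itself; hence $c_{(n),(n-1,1)}+c_{(n-1,1),(n-1,1)}=\binom{n}{1}=n$. Since Theorem~\ref{thm:row1} (or directly~\eqref{eq:cnton1}) gives $c_{(n),(n-1,1)}=\tfrac{n}{2n-1}$, we immediately obtain $c_{(n-1,1),(n-1,1)}=n-\tfrac{n}{2n-1}=\tfrac{2n(n-1)}{2n-1}$. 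This handles the first line with no real work beyond identifying the relevant column.

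For the second and third formulas I would use the column sum over $\lambda=(n-2,2)$. The partitions $\kappa\geq(n-2,2)$ in lexicographic order that can have a nonzero coefficient are $(n)$, $(n-1,1)$, and $(n-2,2)$ (any $\kappa$ with more than two parts contributes $0$ by Lemma~\ref{lem:zero1}, and any $\kappa\geq(n-2,2)$ with exactly two parts must be one of these three). So $c_{(n),(n-2,2)}+c_{(n-1,1),(n-2,2)}+c_{(n-2,2),(n-2,2)}=\binom{n}{2}=\tfrac{n(n-1)}{2}$, and by Theorem~\ref{thm:row1} we know $c_{(n),(n-2,2)}=\binom{n}{2}\cdot\frac{(1/2)_2}{(n-3/2)_2}=\tfrac{n(n-1)}{2}\cdot\frac{3}{(2n-3)(2n-5)}$. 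This single equation links the two unknowns, so I still need one more relation; for that I would apply the recurrence~\eqref{eq:CRec} with $\kappa=(n-1,1)$ and $\lambda=(n-2,2)$ directly. Here $\rho_\kappa-\rho_\lambda=\rho_{(n-1,1)}-\rho_{(n-2,2)}$, which I would compute from~\eqref{eq:RHO} to be $(2n-5)$ times a small factor, and the admissible $\mu$'s obtained by moving a box are just $\mu=(n-1,1)$ (taking $t=2$ in $(\lambda_1+t,\lambda_2-t)=(n,0)$ is excluded since it overshoots $\kappa$; taking $t=1$ gives $(n-1,1)=\kappa$). Thus $c_{(n-1,1),(n-2,2)}$ is expressed through the already-known $c_{(n-1,1),(n-1,1)}=\tfrac{2n(n-1)}{2n-1}$, giving the second formula; substituting back into the column-sum equation then yields $c_{(n-2,2),(n-2,2)}$.

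The main obstacle I anticipate is purely bookkeeping rather than conceptual: correctly enumerating the box-moving operations $(\ldots,\lambda_r+t,\ldots,\lambda_s-t,\ldots)$ that land strictly between $\lambda$ and $\kappa$ in lexicographic order, and carefully evaluating the numerator $(\lambda_r+t)-(\lambda_s-t)$ and the denominator $\rho_\kappa-\rho_\lambda$ for each relevant pair. For two-part partitions this reduces to a handful of cases, so the risk is mainly arithmetic slips in the Pochhammer manipulations; I would double-check the final closed forms numerically against the $n=4$ table in Example~\ref{Tables} (where $c_{(3,1),(3,1)}=24/7$, $c_{(3,1),(2,2)}=16/7$, $c_{(2,2),(2,2)}=16/5$, matching the stated formulas at $n=4$) and against one larger value, e.g.\ $n=5$ or $n=6$, using the software package. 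An alternative to the last step would be to run the recurrence~\eqref{eq:CRec} with $\kappa=(n-2,2)$, $\lambda=(n-2,2)$ trivially gives the initial value, so instead I would verify $c_{(n-2,2),(n-2,2)}$ by using~\eqref{eq:CRec} to compute, say, $c_{(n-2,2),(n-3,2,1)}$ or simply rely on the column-sum identity as above, which is cleaner.
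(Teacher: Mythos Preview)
Your proposal is correct and follows essentially the same route as the paper: use the column-sum identity~\eqref{eq:CInitial} at $\lambda=(n-1,1)$ together with~\eqref{eq:cnton1} for the first formula, apply the recurrence~\eqref{eq:CRec} with $\kappa=(n-1,1)$, $\lambda=(n-2,2)$ (where indeed the only admissible $\mu$ is $(n-1,1)$ and $\rho_\kappa-\rho_\lambda=2n-5$) for the second, and then the column sum at $\lambda=(n-2,2)$ for the third. The only cosmetic difference is that the paper computes $c_{(n-1,1),(n-2,2)}$ before invoking the column sum, whereas you set up the column sum first and then supply the missing relation; the arithmetic and logic are identical.
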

\begin{proof}
Using \eqref{eq:cnton1} and  \eqref{eq:CInitial} with $\lambda=(n-1,1)$, we have
\begin{align*}
 c_{(n-1,1),(n-1,1)}&=\binom{n}{n-1,1}-c_{(n),(n-1,1)}\\
&=n-\frac{n}{2n-1}=\frac{2n^{2}-2n}{2n-1}=\frac{2n\left(n-1\right)}{2n-1}.
\end{align*}
By applying \eqref{eq:CRec}, we obtain
\begin{align*}
  c_{(n-1,1),(n-2,2)}
  &= \frac{(n-2+1)-(2-1)}{\rho_{(n-1,1)}-\rho_{(n-2,2)}}\cdot c_{(n-1,1),(n-1,1)}\\
  &= \frac{(n-2+1)-(2-1)}{2n-5}\cdot\frac{2n(n-1)}{2n-1}= \frac{2n(n-1)(n-2)}{(2n-1)(2n-5)}.
\end{align*}
Finally, using~\eqref{eq:CInitial} and \eqref{eq:1to2}with $\lambda=(n-2,2)$, we get the last coefficient:
\begin{align*}
 c_{(n-2,2),(n-2,2)}&=\binom{n}{n-2,2}-c_{(n),(n-2,2)}-c_{(n-1,1),(n-2,2)}\\
&=\frac{2n(n-1)(n-2)(n-3)}{(2n-3)(2n-5)}.\qedhere
\end{align*}
\end{proof}

It is clear that we could proceed in this manner and compute more coefficients
$c_{\kappa,\lambda}$ in the upper left corner of the matrix for symbolic~$n$.
Since the corresponding calculations get too tedious to be done by hand, we
employ computer algebra to determine the rational function expressions for a
few more coefficients.  Note that, when $n$ is sufficiently large, the
lexicographically largest elements of $\mathcal{P}_n$ are (in descending
order): 
\[
  (n), \; (n-1,1), \; (n-2,2), \; (n-2,1,1), \; (n-3,3), \; (n-3,2,1), \; \text{etc.}
\]
In Table~\ref{tab:cn1} we give closed forms for the coefficients
$c_{\kappa,\lambda}$ when $\kappa$ and $\lambda$ are taken from these
lexicographically largest partitions, i.e., when both $\kappa$ and $\lambda$
are of the form $(n-m,\pi)$, where $\pi\in\mathcal{P}_m$ 
but $n$ is symbolic.  Pictorially speaking, this table represents the
upper left submatrix of the $c_{\kappa,\lambda}$-matrix for large~$n$; more
precisely, for $n\geq6$ since we consider only partitions
$(n),\dots,(n-3,1,1,1)$. We remark that all these formulas have been
rigorously proven by applying \eqref{eq:CRec} and~\eqref{eq:CInitial} for
symbolic~$n$, as it was done in the proof of Theorem~\ref{thm:fam1}. This
symbolic proof strategy is implemented in our Mathematica package as
\textbf{ZonalCoefficientN} (see Section~\ref{sec:mma}).
\begin{table}
\begin{center}
\parbox{0.85\textwidth}{
$
 \begin{array}{c@{\;\;}|@{\;\;}c@{\quad}c@{\quad}c@{\quad}c}
   \kappa\backslash\lambda & (n) & (n-1,1) & (n-2,2) & (n-2,1,1)\\
   \hline \rule{0pt}{14pt}
   (n) & 1 & \frac{n}{2n-1} & \frac{3(n-1)n}{2(2n-3)(2n-1)} & \frac{(n-1)n}{(2n-3)(2n-1)}\\[1ex]
   (n-1,1) & 0 & \frac{2(n-1)n}{2n-1} & \frac{2(n-2)(n-1)n}{(2n-5)(2n-1)} & \frac{2n(2n^{2}-6n+3)}{(2n-5)(2n-1)}\\[1ex]
   (n-2,2) & 0 & 0 & \frac{2(n-3)(n-2)(n-1)n}{(2n-5)(2n-3)} & \frac{4(n-3)(n-2)(n-1)n}{3(2n-5)(2n-3)}\\[1ex]
   (n-2,1,1) & 0 & 0 & 0 & \frac{2}{3}(n-2)n
 \end{array}
$ \\[1ex]
$
 \begin{array}{c@{\;\;}|@{\;\;}c@{\quad}c}
   \kappa\backslash\lambda & (n-3,3) & (n-3,2,1)\\
   \hline \rule{0pt}{14pt}
   (n) & \frac{5(n-2)(n-1)n}{2(2n-5)(2n-3)(2n-1)} & \frac{3(n-2)(n-1)n}{2(2n-5)(2n-3)(2n-1)}\\[1ex]
   (n-1,1) & \frac{3(n-3)(n-2)(n-1)n}{(2n-7)(2n-5)(2n-1)} & \frac{(n-2)n(5n^{2}-20n+11)}{(2n-7)(2n-5)(2n-1)}\\[1ex]
   (n-2,2) & \frac{2(n-4)(n-3)(n-2)(n-1)n}{(2n-9)(2n-5)(2n-3)} & \frac{2(n-3)(n-1)n(5n^{2}-30n+36)}{3(2n-9)(2n-5)(2n-3)}\\[1ex]
   (n-2,1,1) & 0 & \frac{2(n-3)(n-2)n}{3(2n-7)}\\[1ex]
   (n-3,3) & \frac{4(n-5)(n-4)(n-3)(n-2)(n-1)n}{3(2n-9)(2n-7)(2n-5)} & \frac{4(n-5)(n-4)(n-3)(n-2)(n-1)n}{5(2n-9)(2n-7)(2n-5)}\\[1ex]
   (n-3,2,1) & 0 & \frac{4(n-4)(n-3)(n-1)n}{5(2n-7)}
 \end{array}
$
}\\[3ex]
\end{center}
\caption{Coefficients $c_{\kappa,\lambda}$ for some of the lexicographically
largest partitions of~$n$; the lower table continues the upper one
to the right.}
\label{tab:cn1} 
\end{table}
\begin{example}
For example, for $n=23$ we obtain the coefficient $c_{(21,2),(21,1,1)}$ by
reading the entry in Table~\ref{tab:cn1} in row $(n-2,2)$ and column
$(n-2,1,1)$:
\[
  c_{(21,2),(21,1,1)} = \frac{4 (n-3) (n-2) (n-1) n}{3 (2 n-5) (2 n-3)} \Bigg|_{n=23}
  = \frac{4 \cdot 20 \cdot 21 \cdot 22 \cdot 23}{3 \cdot 41 \cdot 43}
  = \frac{283360}{1763}.
\]
\end{example}

We formulate a conjecture on the limit, as $n$ goes to infinity,
of the coefficients of the form $c_{(n),(n-n',\lambda')}$.

\begin{conj}
Let $n'<n$ and $\lambda'=(p_1,\ldots,p_\ell)\in\mathcal{P}_{n'}$. Then, 
\[
\lim_{n\to\infty} c_{(n),(n-n',\lambda') }=\prod_{i=1}^{\ell} \frac{(p_i)_{p_i}}{p_i! \, 2^{2p_i-1}}
\]
\end{conj}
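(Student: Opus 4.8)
The plan is to combine the recurrence \eqref{eq:CRec} with Theorem~\ref{thm:row1} and use the fact, established in Theorem~\ref{thm:zero2}, that $c_{(n),(n-n',\lambda')}=0$ unless $(n-n',\lambda')$ is dominated by $(n)$ in the appropriate partial-sum sense, which is automatic here for $n$ large. First I would set $\kappa=(n)$ and analyze the recurrence for $c_{(n),\lambda}$ where $\lambda=(n-n',p_1,\dots,p_\ell)$. Since $\rho_{(n)}-\rho_\lambda = n(n-1) - \rho_\lambda$ and $\rho_\lambda = (n-n')(n-n'-1) + \sum_i p_i(p_i-i-1)$ (shifting indices because $p_1$ sits in position $2$), the denominator behaves like $2n\cdot n' + O(1)$ as $n\to\infty$. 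Each $\mu$ appearing in the sum is obtained from $\lambda$ by moving $t$ boxes from part $\lambda_s$ to part $\lambda_r$; crucially, the only moves that keep $\mu\leq(n)$ and keep the first part large are those that either (a) feed boxes into the first part $n-n'$ from some $p_j$, or (b) rearrange boxes entirely within the ``small'' block $\lambda'=(p_1,\dots,p_\ell)$. I would argue that moves of type (a) contribute a numerator $(\lambda_1+t)-(\lambda_s-t) = (n-n'+t)-(p_j-t)$, which is $\sim n$, so after division by the denominator $\sim 2nn'$ these terms contribute an $O(1)$-weighted combination of $c_{(n),\mu}$ with $\mu$ having a strictly larger first part (hence strictly smaller $n'$); moves of type (b) have numerator $O(1)$ and denominator $\sim 2nn'$, so they vanish in the limit.

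The key structural step is therefore to show that the limit $L(\lambda') := \lim_{n\to\infty} c_{(n),(n-n',\lambda')}$ satisfies a recurrence purely in terms of the ``small'' partition $\lambda'$, obtained by discarding the type-(b) moves and taking $n\to\infty$ in the type-(a) contributions. Concretely, in the type-(a) move that transfers $t$ boxes out of the part $p_j$ (for $1\le t\le p_j$), the numerator $(n-n'+t)+(t-p_j)$ divided by $2nn' + O(1)$ tends to $1/(2n')$ — wait, more carefully, $n' $ decreases to $n'-t$ in $\mu$, so I must track this; the upshot I expect is a telescoping recurrence whose solution is the product $\prod_i (p_i)_{p_i}/(p_i!\,2^{2p_i-1})$. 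To pin down the precise form I would first verify the single-part case $\lambda'=(p)$: here $\mu$ ranges over $(n-p+t,p-t)$ for $t=1,\dots,p$ plus the reordering, and Theorem~\ref{thm:row1} already gives the exact value $c_{(n),(n-p,p)} = \binom{n}{p}(\tfrac12)_p/(n-p+\tfrac12)_p$; letting $n\to\infty$, $\binom{n}{p}\sim n^p/p!$ and $(n-p+\tfrac12)_p\sim n^p$, so the limit is $(\tfrac12)_p/p! = (2p-1)!!/(2^p p!) = (2p)!/(2^{2p}(p!)^2) \cdot p!/p!$... and using $(2p)! = (2p)!$ one checks $(\tfrac12)_p/p! = \binom{2p}{p}/4^p = (p)_p/(p!\,2^{2p-1})$ after simplifying $(p)_p = (2p-1)!/(p-1)!$. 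This confirms the $\ell=1$ base case and identifies the per-part factor.

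The multiplicativity over the parts $p_1,\dots,p_\ell$ is what I expect to be the main obstacle. The difficulty is that the recurrence \eqref{eq:CRec}, applied to $\lambda'$ with several parts, mixes boxes among all parts, so it is not a priori obvious that $L(\lambda')$ factors. My plan to handle this is an induction on $|\lambda'| = n'$: peel off the moves involving the last part $p_\ell$, show that in the limit these decouple — i.e. the generating recurrence for $L((p_1,\dots,p_\ell))$ reduces to $L((p_1,\dots,p_{\ell-1}))$ times a factor depending only on $p_\ell$, which by the base-case computation must be $(p_\ell)_{p_\ell}/(p_\ell!\,2^{2p_\ell-1})$ — and invoke the induction hypothesis. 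A cleaner alternative I would try in parallel: recognize $c_{(n),(n-n',\lambda')}$, in the limit, as a structure constant for multiplying power-sum-like or augmented-monomial symmetric functions in the zonal basis as $m\to\infty$ (the "stable" regime), where such multiplicativity is a known phenomenon (cf.\ the relation to Jack polynomials at $\alpha=2$ noted in Section~\ref{sec:Macdonald}); identifying the limit with a known stable structure constant would give the product formula directly. I would expect the honest bookkeeping in the first approach — carefully showing that every type-(b) move is indeed $o(1)$ uniformly and that the surviving type-(a) contributions assemble into the claimed product — to be the genuinely technical part of the argument.
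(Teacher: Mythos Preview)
The paper does not prove this statement --- it is presented as a Conjecture, supported only by the numerical example that follows it. There is thus no paper proof to compare against, and I evaluate your proposal on its own merits.

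Your strategy is sound and, with one adjustment, can be made into a complete proof. The base case $\ell=1$ is correctly handled via Theorem~\ref{thm:row1}, and your type-(a)/type-(b) split is exactly the right way to pass to the limit in~\eqref{eq:CRec}: writing $L(\lambda') := \lim_{n\to\infty} c_{(n),(n-|\lambda'|,\lambda')}$, each type-(a) coefficient tends to $1/(2n')$ (with $n'=|\lambda'|$ fixed at the level of~$\lambda$, not of~$\mu$ --- this resolves your ``wait, more carefully'' aside), while each type-(b) coefficient tends to~$0$ once $L(\mu')$ is known to be finite by induction on the lexicographic order of~$\mu$. One arrives at the clean limiting recurrence
\[
  L(\lambda') \;=\; \frac{1}{2n'} \sum_{j=1}^{\ell}\,\sum_{t=1}^{p_j}
  L\bigl(\lambda' \text{ with } p_j \mapsto p_j - t\bigr).
\]
The genuine gap is your plan for multiplicativity: the ``peel off~$p_\ell$'' decoupling you sketch does not fit this recurrence, which mixes all parts symmetrically and never isolates~$p_\ell$. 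The direct fix is to substitute the product ansatz $L(\lambda')=\prod_i f(p_i)$ with $f(p)=(\tfrac12)_p/p!=\binom{2p}{p}4^{-p}$ and divide through; the recurrence then reads $\sum_{j}\sum_{s=0}^{p_j-1} f(s)/f(p_j) = 2n'$, which holds part by part thanks to the classical identity $\sum_{s=0}^{p-1}\binom{2s}{s}4^{-s} = 2p\,\binom{2p}{p}4^{-p}$. Induction on~$n'$, anchored at $c_{(n),(n)}=1$, then establishes the Conjecture.
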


\begin{example}
Let $n'=3$ and $\lambda'=(2,1)$. By Table \ref{tab:cn1} and the conjecture above
\begin{align*}
  \lim_{n\rightarrow\infty}c_{(n),(n-3,2,1)} &=
  \lim_{n\rightarrow\infty}\frac{3(n-2)(n-1)n}{2(2n-5)(2n-3)(2n-1)}=\frac{3}{16} \\
  &= \frac{(2)_2}{2! \, 2^{2\cdot 2-1}}\cdot\frac{(1)_1}{1! \, 2^{2\cdot 1-1}}.
\end{align*}
\end{example}

The next families we study are located in the ``lower right corner'' of
$c_{\kappa,\lambda}$-matrix. In this case both $\kappa$ and $\lambda$ are of
the form $(2^m,1^{n-2m})$, i.e., a sequence of $m$ times the part~$2$ and
$n-2m$ times the part~$1$. Note that for $m=0,1,2,\dots$ we obtain the
lexicographically smallest partitions of~$n$ (in increasing order).  In
contrast to Theorem~\ref{thm:fam1} it is not straightforward to derive
formulas for general~$n$ by means of Equations \eqref{eq:CRec}
and~\eqref{eq:CInitial}. Instead, we compute the first few values of these
sequences, say up to $n=30$, and then guess a closed form expression.
Some results are displayed in Table~\ref{tab:cn2}.

More precisely, we first applied the \texttt{Guess.m} package~\cite{Guess} to
find a plausible candidate for a linear recurrence with polynomial
coefficients (in all considered instances this recurrence was of first order,
which easily allowed for a closed form solution). Then, after observing that
all expressions obtained this way were of the form $2^n\cdot r(n)$ where
$r(n)$ is some rational function in~$n$, we refined our ansatz to only search
for expressions of this form: divide the $n$-th sequence entry by $2^n$ and
then perform polynomial interpolation and rational reconstruction.
\begin{table}
\begin{center}
\parbox{0.85\textwidth}{
$
  \begin{array}{c@{\;\;}|@{\;\;}c@{\quad}c@{\quad}c}
    \kappa\backslash\lambda
    & (2^4,1^{n-8}) & (2^3,1^{n-6}) & (2^2,1^{n-4})
    \\ \hline
    \rule{0pt}{14pt}
    (2^4,1^{n-8})
    & \frac{2^{n-3} (n-6) (n-5) n}{15}
    & \frac{2^{n-3} (n-7) (n-6)^2 n}{15}
    & \frac{2^{n-4} (n-7) (n-6)^2 (n-5) n}{15}
    \\[1ex]
    (2^3,1^{n-6})
    & 0
    & \frac{2^{n-3} (n-4) (n-3)}{3}
    & \frac{2^{n-3} (n-5) (n-4)^2}{3}
    \\[1ex]
    (2^2,1^{n-4})
    & 0 & 0
    & \frac{2^{n-1} (n-2) (n-1)}{3 (n+1)}
  \end{array}
$ \\[1ex]
$
  \begin{array}{c@{\;\;}|@{\;\;}c@{\quad}c}
    \kappa\backslash\lambda
    & (2,1^{n-2}) & (1^n) \\ \hline
    \rule{0pt}{14pt}
    (2^4,1^{n-8})
    & \frac{2^{n-4} (n-7) (n-6)^2 (n-5) (n-2) n}{45}
    & \frac{2^{n-6} (n-7) (n-6)^2 (n-5) (n-1) n^2}{45}
    \\[1ex]
    (2^3,1^{n-6})
    & \frac{2^{n-4} (n-5) (n-4)^2 (n-3)}{3}
    & \frac{2^{n-4} (n-5) (n-4)^2 (n-3) n}{9}
    \\[1ex]
    (2^2,1^{n-4})
    & \frac{2^{n-1} (n-3) (n-2)^2}{3 (n+1)}
    & \frac{2^{n-2} (n-3) (n-2)^2 (n-1)}{3 (n+1)}
    \\[1ex]
    (2,1^{n-2})
    & \frac{2^{n-1} n}{n+2}
    & \frac{2^{n-1} (n-1) n^2}{(n+1) (n+2)}
    \\[1ex]
    (1^n)
    & 0 & \frac{2^n}{n+1}
  \end{array}
$
}\\[3ex]
\end{center}
\caption{Coefficients $c_{\kappa,\lambda}$ for some of the lexicographically
  smallest partitions of~$n$; the lower table continues the upper one to the
  right.}
\label{tab:cn2}
\end{table}

\section{Partitions with two parts}\label{sec:2parts}

Now we concentrate on the coefficients $c_{\kappa,\lambda}$ when both $\kappa$
and $\lambda$ have at most two parts. Takemura~\cite[\S~4.4]{Takemura}
gives formulas for these coefficients in terms of elementary symmetric
functions, while we focus on monomial symmetric functions.

When applying the recursive formula~\eqref{eq:CRec} one sees that the
partitions~$\mu$ cannot have more parts than~$\lambda$, by the way how they
are constructed. Similarly, when we use \eqref{eq:CInitial} to compute
$c_{\lambda,\lambda}$, only those coefficients $c_{\kappa,\lambda}$ contribute
for which $\kappa$ has at most two parts; this is a direct consequence of
Lemma~\ref{lem:zero1}.  We conclude that, for the computation of
$c_{\kappa,\lambda}$, we do not need any $c_{\kappa',\lambda'}$ with $\kappa'$
or $\lambda'$ having more than two parts.  Throughout this section we write
\[
  \kappa=(a,a-b) \quad\text{and}\quad \lambda=(a-d,a-b+d)
\]
for nonnegative integers $a,b,d$.  The condition $b<a$ ensures that $\kappa$
is a proper partition and the condition $d\leq b/2$ ensures that the
parts of $\lambda$ are in the correct order. In Theorem~\ref{thm:cf2} we
present a closed form of the coefficients $c_{\kappa,\lambda}$ under these
assumptions. We start our investigation by specializing
 \eqref{eq:CRec} and \eqref{eq:CInitial} to partitions with two
parts. First of all, it is easy to compute that 
\[
  \rho_{\kappa}-\rho_{\lambda} = d\cdot(2b-2d+1).
\]
Then, recurrence~\eqref{eq:CRec} can be written as, by noting that
$\lambda$ has only two parts,
\begin{align}\label{eq:rec2}
  c_{\kappa,\lambda} = c_{(a,a-b),(a-d,a-b+d)} &=
  \sum_{\lambda<\mu\leq\kappa}\frac{(a-d+t)-(a-b+d-t)}{\rho_{\kappa}-\rho_{\lambda}} c_{\kappa,\mu} \notag \\
  &= \sum_{t=1}^d\frac{b-2d+2t}{d\,(2b-2d+1)}c_{(a,a-b),(a-d+t,a-b+d-t)} \notag \\ 
  & = \sum_{j=0}^{d-1} \frac{b-2j}{d\,(2b-2d+1)}c_{(a,a-b),(a-j,a-b+j)}. \allowdisplaybreaks
\end{align}

\begin{example}\label{ex:twoparts}
We consider some specific examples.  For $d=1,2,3$, recursively applying \eqref{eq:rec2} yields
\allowdisplaybreaks
\begin{align*}
  c_{(a,a-b),(a-1,a-b+1)} &=
    \frac{b}{2b-1}\,c_{(a,a-b),(a,a-b)}, \\
  c_{(a,a-b),(a-2,a-b+2)} &=
\frac{3b(b-1)}{2(2b-1)(2b-3)}\,c_{(a,a-b),(a,a-b)}, \\
  c_{(a,a-b),(a-3,a-b+3)} &=
\frac{5b(b-1)(b-2)}{2(2b-1)(2b-3)(2b-5)}\,c_{(a,a-b),(a,a-b)}.
\end{align*}
\end{example}

\begin{prop}\label{prop:p2row}
  Given two partitions $\kappa=(a,a-b)$ and $\lambda=(a-d,a-b+d)$ of the positive integer $n=2a-b$,
  with $0\leq b<a$ and $0\leq d\leq b/2$, then
  \begin{equation}\label{eq:2PartRec}
    c_{\kappa,\lambda} = c_{(a,a-b),(a-d,a-b+d)} =
    \binom{b}{d}\,\frac{\bigl(\frac12\bigr)_{\!d}}{\bigl(b-d+\frac12\bigr)_{\!d}} \cdot c_{(a,a-b),(a,a-b)}.
  \end{equation}
\end{prop}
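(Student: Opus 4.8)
The plan is to prove \eqref{eq:2PartRec} by induction on~$d$, in the same spirit as the proof of Theorem~\ref{thm:row1} (indeed this proposition generalizes it: the case $\kappa=(n)$ is $b=n$, $d=m$, with $c_{(n),(n)}=1$). Abbreviate $C_d:=c_{(a,a-b),(a-d,a-b+d)}$, so that the claim reads $C_d=\binom{b}{d}\frac{(1/2)_d}{(b-d+\frac12)_d}\,C_0$; the case $d=0$ is the trivial identity $C_0=C_0$, and the small cases $d=1,2,3$ are already recorded in Example~\ref{ex:twoparts}. Throughout, the partitions involved stay admissible because $0\le j\le d\le b/2$, and the denominator $\rho_\kappa-\rho_\lambda=d\,(2b-2d+1)$ in \eqref{eq:rec2} is strictly positive for $d\ge1$, so the recurrence is legitimate.

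The one non-mechanical step is to notice that the sum appearing in \eqref{eq:rec2} for $C_{d+1}$ essentially contains the recurrence for $C_d$ itself. Writing \eqref{eq:rec2} for $C_d$ gives $d\,(2b-2d+1)\,C_d=\sum_{j=0}^{d-1}(b-2j)\,C_j$ (vacuously true for $d=0$), whence
\[
  \sum_{j=0}^{d}(b-2j)\,C_j=\bigl(d(2b-2d+1)+(b-2d)\bigr)C_d=(2d+1)(b-d)\,C_d,
\]
using the elementary simplification $d(2b-2d+1)+(b-2d)=(2d+1)(b-d)$. Feeding this back into \eqref{eq:rec2} with $d$ replaced by $d+1$ collapses the recursion to the one-step relation
\[
  C_{d+1}=\frac{(2d+1)(b-d)}{(d+1)\,(2b-2d-1)}\,C_d,\qquad d\ge0 .
\]

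From here the proof is bookkeeping. One way is to iterate the one-step relation to obtain $C_d=C_0\prod_{j=0}^{d-1}\frac{(2j+1)(b-j)}{(j+1)(2b-2j-1)}$ and then evaluate the four factors separately via $\prod_{j=0}^{d-1}(2j+1)=2^d(1/2)_d$, $\prod_{j=0}^{d-1}(b-j)=d!\binom{b}{d}$, $\prod_{j=0}^{d-1}(j+1)=d!$, and $\prod_{j=0}^{d-1}(2b-2j-1)=2^d\bigl(b-d+\tfrac12\bigr)_d$; the powers of~$2$ and the factors $d!$ cancel, leaving exactly $\binom{b}{d}\frac{(1/2)_d}{(b-d+\frac12)_d}$. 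Alternatively one runs the induction directly, verifying that the ratio of the claimed closed forms at $d+1$ and $d$ equals $\frac{(2d+1)(b-d)}{(d+1)(2b-2d-1)}$, which follows from $\binom{b}{d+1}=\frac{b-d}{d+1}\binom{b}{d}$, $(1/2)_{d+1}=\frac{2d+1}{2}(1/2)_d$, and $\bigl(b-d-\tfrac12\bigr)_{d+1}=\frac{2b-2d-1}{2}\bigl(b-d+\tfrac12\bigr)_d$.

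I do not expect a real obstacle: \eqref{eq:rec2} is completely explicit, and the only idea needed is spotting the self-similarity that turns $\sum_{j=0}^{d}(b-2j)C_j$ into a scalar multiple of~$C_d$. The only thing to watch is the half-integer Pochhammer shifts, so I would sanity-check the identities $\prod_{j=0}^{d-1}(2j+1)=2^d(1/2)_d$ and $\prod_{j=0}^{d-1}(2b-2j-1)=2^d\bigl(b-d+\tfrac12\bigr)_d$ on $d=1,2$ before writing them down.
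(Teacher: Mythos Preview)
Your proof is correct. The key observation---that $\sum_{j=0}^{d}(b-2j)C_j=(2d+1)(b-d)C_d$ because the partial sum up to $d-1$ is already $d(2b-2d+1)C_d$ by~\eqref{eq:rec2}---collapses the full recurrence to the first-order relation $C_{d+1}=\frac{(2d+1)(b-d)}{(d+1)(2b-2d-1)}C_d$, and the closed form then follows by a routine product evaluation. This is precisely the inductive strategy used for Theorem~\ref{thm:row1}, and the paper itself remarks that ``the same inductive argument could be applied'' here.

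The paper, however, chooses to illustrate a different technique: rather than deriving a one-step recurrence, it substitutes the conjectured closed form directly into~\eqref{eq:rec2} and verifies the resulting summation identity via Gosper's algorithm, exhibiting an explicit antidifference $g(j)$ with $g(j+1)-g(j)=f(j)$ and telescoping. Your approach is more elementary and self-contained---no symbolic-summation machinery is needed, and the one-step recurrence makes the structure of the answer transparent. The paper's approach, by contrast, is fully automatic once the closed form is guessed, and showcases a method that scales to identities where the self-similarity trick is not available. Both arrive at the same place; yours is arguably the cleaner hand proof.
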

\begin{proof}
  Note that this result is a direct generalization of Theorem~\ref{thm:row1},
  and hence the same inductive argument could be applied. Instead, we
  illustrate a different proof strategy, using symbolic summation. Because of
  the recursive definition of the coefficients $c_{\kappa,\lambda}$,
  demonstrated in Example~\ref{ex:twoparts}, it suffices to show that the
  asserted expression satisfies the recurrence~\eqref{eq:rec2}. Namely, we
  need to prove
  \begin{equation}\label{eq:sumid}
    \binom{b}{d}\,\frac{\bigl(\frac12\bigr)_{\!d}}{\bigl(b-d+\frac12\bigr)_{\!d}} =
    \sum_{j=0}^{d-1} \binom{b}{j}\,\frac{(b-2j)\,\bigl(\frac12\bigr)_{\!j}}{d\,(2b-2d+1)\,\bigl(b-j+\frac12\bigr)_{\!j}}.
  \end{equation}
  Using special-purpose computer algebra packages, such as the
  HolonomicFunctions package~\cite{HolonomicFunctions}, we find that the
  expression in the sum, denote it by $f(j)$, is Gosper-summable. More
  precisely, we find a function
  \[
    g(j) = \frac{j\,(2b-2j+1)}{b-2j} \cdot f(j) =
    \binom{b}{j}\frac{j\,(2b-2j+1) \bigl(\frac12\bigr)_{\!j}}{d\,(2b-2d+1)\,\bigl(b-j+\frac12\bigr)_{\!j}}
  \]
  with the property $g(j+1)-g(j)=f(j)$ (the latter can be easily
  verified). By telescoping, and by noting that $g(0)=0$, we obtain
  the value of the right-hand side of~\eqref{eq:sumid}:
  \[
    g(d) = \binom{b}{d}\cdot\frac{\bigl(\frac12\bigr)_{\!d}}{\bigl(b-d+\frac12\bigr)_{\!d}},
  \]
  which matches exactly the left-hand side of~\eqref{eq:sumid}.
\end{proof}

\begin{thm}\label{thm:cf2}
  Let $a,b,d\in\mathbb{N}$ with $0\leq b<a$ and $0\leq d\leq b/2$.
  Then we have
  \[
    c_{(a,a-b),(a-d,a-b+d)} = 
    \frac{(2a-b)! \, \bigl(b+\frac12\bigr) \, \bigl(\frac12\bigr)_{\!d}}
         {d! \, (a-b)! \, (b-d)! \, \bigl(b-d+\frac12\bigr)_{\!a-b+d+1}}.
  \]
\end{thm}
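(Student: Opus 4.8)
The plan is to derive the theorem from Proposition~\ref{prop:p2row} together with the normalization~\eqref{eq:CInitial}. Proposition~\ref{prop:p2row} already reduces everything to the diagonal coefficient: since $c_{(a,a-b),(a-d,a-b+d)}=\binom{b}{d}\frac{(1/2)_d}{(b-d+1/2)_d}\,c_{(a,a-b),(a,a-b)}$, the theorem is equivalent to the single evaluation
\[
  c_{(a,a-b),(a,a-b)}\;=\;\frac{(2a-b)!\,\bigl(b+\tfrac12\bigr)}{(a-b)!\,b!\,\bigl(b+\tfrac12\bigr)_{a-b+1}}.
\]
That this value, multiplied by the factor from Proposition~\ref{prop:p2row}, reproduces exactly the claimed right-hand side is immediate from the telescoping identity $\bigl(b-d+\tfrac12\bigr)_d\bigl(b+\tfrac12\bigr)_{a-b+1}=\bigl(b-d+\tfrac12\bigr)_{a-b+d+1}$: the two products on the left concatenate into the single run on the right, because $b-\tfrac12$ is immediately followed by $b+\tfrac12$. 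So the entire content of the theorem lies in pinning down the diagonal coefficient.

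To obtain the diagonal I would set $n=2a-b$ and apply~\eqref{eq:CInitial} to the column $\lambda=(a,a-b)$. By Lemma~\ref{lem:zero1} the only partitions $\kappa\ge\lambda$ that contribute are $\kappa=(a+s,\,a-b-s)$ for $s=0,1,\dots,a-b$, the last one being $(n)$. Applying Proposition~\ref{prop:p2row} once more to each term (with the end term $\kappa=(n)$ supplied instead by Theorem~\ref{thm:row1}) writes $c_{(a+s,\,a-b-s),\,(a,a-b)}$ as $\binom{b+2s}{s}\frac{(1/2)_s}{(b+s+1/2)_s}$ times the diagonal $c_{(a+s,\,a-b-s),\,(a+s,\,a-b-s)}$. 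Substituting the conjectured closed form for each of these diagonals turns~\eqref{eq:CInitial} into a terminating hypergeometric summation identity in the parameters $n$ and $b$, namely
\[
  \sum_{s=0}^{a-b}\binom{b+2s}{s}\,\frac{(1/2)_s\,\bigl(b+2s+\tfrac12\bigr)}{\bigl(b+s+1/2\bigr)_s}\cdot\frac{(2a-b)!}{(a-b-s)!\,(b+2s)!\,\bigl(b+2s+\tfrac12\bigr)_{a-b-s+1}}\;=\;\binom{2a-b}{a}.
\]
Since the coefficients $c_{\kappa,\lambda}$ are uniquely determined by~\eqref{eq:CRec} and~\eqref{eq:CInitial}, and since, for fixed $n$, applying~\eqref{eq:CInitial} successively to $\lambda=(n),(n-1,1),(n-2,2),\dots$ constitutes a triangular system for the diagonals with base value $c_{(n),(n)}=1$, verifying this identity for all $a,b$ establishes the formula for every diagonal, and hence the theorem.

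The hard part will be the summation identity itself; its awkwardness stems from the half-integer shifts in the Pochhammer symbols and the stray linear factor $b+2s+\tfrac12$. My first attempt would be to normalize the summand: pull $b+2s+\tfrac12$ out as a ratio of two Pochhammer symbols whose parameters differ by $1$, contract $\bigl(b+s+\tfrac12\bigr)_s\bigl(b+2s+\tfrac32\bigr)_{a-b-s}$ (two runs meeting except for one missing middle factor) into a single Pochhammer symbol, and use Pochhammer reflection to remove the summation index from the falling length $a-b-s$; after this the sum becomes, up to an $s$-free prefactor, a terminating ${}_{4}F_{3}$ at argument $-1$ whose value should collapse to the ratio $\bigl(b+\tfrac32\bigr)_{a-b}/\bigl(b+1\bigr)_{a-b}$, which I would then try to match against a classical evaluation (a Whipple/Watson-type reduction, or Pfaff--Saalsch\"utz after a standard transformation). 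Should no slick closed form present itself, I would follow the strategy already used in the proof of Proposition~\ref{prop:p2row} and certify the identity by creative telescoping --- e.g.\ with the \texttt{HolonomicFunctions} package --- which reduces the claim to the routine verification of a rational-function certificate. A downward induction within each $n$-family would avoid the hypergeometric recognition, computing $c_{(a,a-b),(a,a-b)}$ from~\eqref{eq:CInitial} in terms of the already-known larger-index diagonals, but it leads back to the identical sum, so the hypergeometric computation appears unavoidable.
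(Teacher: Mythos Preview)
Your plan is correct and mirrors the paper's proof almost exactly: reduce via Proposition~\ref{prop:p2row} to the diagonal value, specialize~\eqref{eq:CInitial} to two-part partitions, and arrive at the same summation identity (after the obvious simplification of your summand it is precisely the paper's~\eqref{eq:sumid2}). The paper does not attempt to recognize the sum as a classical ${}_4F_3$ evaluation; it goes straight to your fallback and applies the WZ method, exhibiting two explicit certificates $g_1,g_2$ that show the sum is independent of both $a$ and~$b$, then evaluates at $a=b=0$.
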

\begin{proof}
  We first show that the asserted expression is compatible with the result of
  Proposition~\ref{prop:p2row}: indeed, by computing the quotient
  \begin{align*}
    \frac{c_{(a,a-b),(a-d,a-b+d)}}{c_{(a,a-b),(a,a-b)}} &=
    \frac{(2a-b)! \, \bigl(b+\frac12\bigr) \, \bigl(\frac12\bigr)_{\!d} \cdot
      (a-b)! \, b! \, \bigl(b+\frac12\bigr)_{\!a-b+1}}
      {d! \, (a-b)! \, (b-d)! \, \bigl(b-d+\frac12\bigr)_{\!a-b+d+1} \!\cdot
        (2a-b)! \, \bigl(b+\frac12\bigr)} \\
    &= \frac{b! \, \bigl(\frac12\bigr)_{\!d} \, \bigl(b+\frac12\bigr)_{\!a-b+1}}
      {d! \, (b-d)! \, \bigl(b-d+\frac12\bigr)_{\!a-b+d+1}}
    = \binom{b}{d}\,\frac{\bigl(\frac12\bigr)_{\!d}}{\bigl(b-d+\frac12\bigr)_{\!d}},
  \end{align*}
  we see that this is the case. It remains to prove that the asserted
  expression is correct in the case $d=0$, i.e., when $\kappa=\lambda$.  For
  this purpose, we employ the recursion~\eqref{eq:CInitial}, specialized to
  partitions with two parts:
  \begin{equation}\label{eq:CInitial2}
    \sum_{d=0}^{a-b} c_{(a+d,a-b-d),(a,a-b)} = \binom{2a-b}{a}.
  \end{equation}
  By dividing both sides with the binomial coefficient of the right-hand side,
  and by inserting the asserted closed form (after the change of variables
  $a\to a+d$ and $b\to b+2d$), we are left with the summation identity
  \begin{equation}\label{eq:sumid2}
    \sum_{d=0}^{a-b}
    \frac{a! \, (a-b)! \, \bigl(b+2d+\frac{1}{2}\bigr) \, \bigl(\frac{1}{2}\bigr)_{\!d}}
         {d! \, (a-b-d)! \, (b+d)! \, \bigl(b+d+\frac12\bigr)_{\!a-b+1}} = 1.
  \end{equation}
  Taking into account the recursive definition of the coefficients
  $c_{\kappa,\lambda}$, the (inductive) proof is completed by
  verifying~\eqref{eq:sumid2}.
  For this purpose, we denote by $f(a,b,d)$ the expression inside the
  sum~\eqref{eq:sumid2} and construct two WZ pairs, i.e., two functions
  \begin{align*}
    g_1(d) &= \frac{-2d(b+d)}{(a-b-d+1)(2b+4d+1)}\cdot f(a,b,d) \\
    &= -\frac{a! \, (a-b)! \, \bigl(\frac12\bigr)_{\!d}}{(d-1)! \, (b+d-1)! \, (a-b-d+1)! \, \bigl(b+d+\frac12\bigr)_{\!a-b+1}}, \\
    g_2(d) &= \frac{d(2a+2d+1)}{(a-b)(2b+4d+1)} \cdot f(a,b,d)\\
  &= \frac{a! \, (a-b-1)! \, \bigl(\frac12\bigr)_{\!d}}{(d-1)! \, (b+d)! \, (a-b-d)! \, \bigl(b+d+\frac12\bigr)_{\!a-b}},
  \end{align*}
  such that the following identities hold (they can be verified by routine
  calculations):
  \begin{align}
    f(a+1,b,d)-f(a,b,d) &= g_1(d+1) - g_1(d), \label{eq:WZ1} \\
    f(a,b+1,d)-f(a,b,d) &= g_2(d+1) - g_2(d). \label{eq:WZ2}
  \end{align}
  Now we sum \eqref{eq:WZ1} for $d=0,\dots,a-b$ and obtain
  \[
    \sum_{d=0}^{a-b}\bigl(f(a+1,b,d)-f(a,b,d)\bigr) = g_1(a-b+1) - g_1(a,0),
  \]
  or equivalently,
  \[
    \sum_{d=0}^{a-b+1}f(a+1,b,d) - \sum_{d=0}^{a-b}f(a,b,d) = g_1(a-b+1) - g_1(a,0) + f(a+1,b,a-b+1).
  \]
  A straightforward calculation shows that the left-hand side equals~$0$,
  thereby showing that the sum $\sum_{d=0}^{a-b}f(a,b,d)$ is independent
  of~$a$. Summing over \eqref{eq:WZ2}, followed by a similar calculation,
  shows that the sum does not depend on~$b$ either. Therefore, the sum
  in~\eqref{eq:sumid2} is constant, and by setting $a=b=0$, one immediately
  sees that this constant is~$1$.
\end{proof}

\begin{rem}
  By setting $b=a$ in Theorem~\ref{thm:cf2} and by interpreting $(a,0)$ as the
  partition~$(a)$, we recover Theorem~\ref{thm:row1}:
  \[
    c_{(a),(a-d,d)} = 
    \frac{(a)! \, \bigl(a+\frac12\bigr) \, \bigl(\frac12\bigr)_{\!d}}
         {d! \, (a-d)! \, \bigl(a-d+\frac12\bigr)_{\!d+1}} =
    \binom{a}{d} \, \frac{\bigl(\frac12\bigr)_{\!d}}{\bigl(a-d+\frac12\bigr)_{\!d}}.
  \]
\end{rem}

\section{Partitions with three and four parts}\label{sec:3+4parts}

We have seen that the coefficients of the zonal polynomial $\mathcal{C}_\kappa(Y)$ are
given by the row indexed by~$\kappa$ in the $c_{\kappa,\lambda}$-matrix. Using
\eqref{eq:CRec} we can express all coefficients~$c_{\kappa,\lambda}$ in the
$\kappa$-th row as constant multiples of the diagonal
coefficient~$c_{\kappa,\kappa}$.  Unfortunately, the latter one is harder to
obtain: to apply \eqref{eq:CInitial} we need to know all $c_{\kappa,\lambda}$
in the $\lambda$-th column, which in turn are obtained by \eqref{eq:CRec} and
so on. Hence, in the worst case, we need to compute the whole triangle above
the position $(\kappa,\kappa)$.

Therefore, it would be highly desirable to have a more direct way to compute
the diagonal coefficients~$c_{\kappa,\kappa}$. We present formulas for the
special cases that $\kappa$ has three resp.\ four parts.

\begin{conj}\label{conj:diag3}
  Let $\kappa=(a,a-b,a-c)$ with integers $0\leq b\leq c\leq a$ be a partition
  of $n=3a-b-c$ into at most three parts. Then the diagonal coefficient
 \[ 
 c_{\kappa,\kappa}=\frac{(c+1)!}{(a+1)!} \cdot \frac{n!}{\delta_1! \, \delta_2! \, \delta_3! \,
     \bigl(\delta_1+\frac32\bigr)_{\!\delta_2} \, \bigl(\delta_2+\frac32\bigr)_{\!\delta_3}}, 
 \] 
  with $\delta_1=\kappa_1-\kappa_2=b$, $\delta_2=\kappa_2-\kappa_3=c-b$, and
  $\delta_3=\kappa_3-\kappa_4=a-c$ being the differences between consecutive
  parts of~$\kappa$ (with the convention $\kappa_4=0$).
\end{conj}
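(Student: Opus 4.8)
The plan is to mirror, in the three-part setting, the strategy that was successful for two parts (Theorem~\ref{thm:cf2}), namely: (i) reduce the whole $\kappa$-th row to the diagonal coefficient via the recurrence~\eqref{eq:CRec}; (ii) substitute the closed form into the column-sum identity~\eqref{eq:CInitial}; (iii) verify the resulting multi-parameter hypergeometric summation identity by creative telescoping / WZ methods. Concretely, fix $\kappa=(a,a-b,a-c)$. Since (as noted in Section~\ref{sec:2parts}) the partitions~$\mu$ arising in~\eqref{eq:CRec} never have more parts than~$\lambda$, all coefficients needed to compute $c_{\kappa,\kappa}$ via~\eqref{eq:CInitial} come from partitions with at most three parts; thus the whole problem lives inside the three-part sublattice and can be parametrized by the consecutive differences $\delta_1,\delta_2,\delta_3$. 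The first real step is to solve~\eqref{eq:CRec} symbolically on this sublattice: express $c_{\kappa,\lambda}$, for $\lambda=(a,a-b,a-c)$ moved up toward $\kappa$, as an explicit rational-times-Pochhammer multiple of $c_{\kappa,\kappa}$. I expect this row relation to factor, just as in Proposition~\ref{prop:p2row}, into a product over the two ``gaps'' $\delta_2,\delta_3$ of hypergeometric-type factors of the form $\binom{\delta}{j}(\tfrac12)_j / (\delta-j+\tfrac32)_j$ or similar — the shifted half-integer parameters $\tfrac32$ in the conjectured formula strongly suggest this shape. Establishing that factorization is the first milestone and is itself a Gosper-summable telescoping problem in each gap variable.

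With the row relation in hand, the second step is to plug the conjectured $c_{\kappa,\kappa}$ into~\eqref{eq:CInitial} with $\lambda=\kappa$. The column sum runs over all $\kappa'\geq\lambda$ with at most three parts, i.e.\ over all ways of redistributing mass upward among three parts; parametrizing $\kappa'$ by two summation indices (say how much is added to the first and to the second part), the identity~\eqref{eq:CInitial} becomes a double hypergeometric sum that must equal the multinomial coefficient $\binom{n}{\delta_1,\delta_2,\delta_3}$ — equivalently, after dividing through, a double sum equal to~$1$. The third step is to prove this double-sum identity. I would do the inner sum first (over the index attached to, say, $\delta_3$), expecting it to evaluate in closed form by Gosper/Zeilberger to something that collapses one Pochhammer tower, leaving a single-variable sum that is again Gosper-summable — or, failing a clean iterated evaluation, I would treat it as a genuine two-variable creative-telescoping problem: build WZ certificates $g_1,g_2,g_3$ implementing the shifts $a\to a+1$, $b\to b+1$, $c\to c+1$ (exactly as the pair $g_1,g_2$ was used for two parts), sum the WZ recurrences over the summation box, check that the boundary terms telescope away, and conclude that the normalized sum is independent of $a,b,c$; then evaluate at $a=b=c=0$ to pin the constant to~$1$. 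All of this is of the ``routine but tedious, delegate to HolonomicFunctions / Sigma'' type, consistent with how the analogous two-part identities~\eqref{eq:sumid} and~\eqref{eq:sumid2} were dispatched.

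The main obstacle I anticipate is step~(iii) in its double-sum incarnation: unlike the two-part case, the column sum here is genuinely two-dimensional, and the boundary analysis when summing the WZ recurrences over a triangular (rather than rectangular) summation region is delicate — one must check that the ``diagonal'' boundary contributions $f(\dots,\text{upper limit})$ cancel against the certificate's endpoint values, and that the region's shape is preserved under the parameter shifts. A secondary difficulty is bookkeeping: correctly translating the combinatorial description of which $\kappa'$ and which $\mu$ occur in~\eqref{eq:CInitial} and~\eqref{eq:CRec} into clean summation bounds, handling the edge cases $\delta_2=0$ or $\delta_3=0$ (where a gap closes and the three-part partition degenerates), and confirming the stated validity range $0\le b\le c\le a$. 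If the iterated single-sum evaluation in step~(iii) happens to go through, the proof becomes essentially a two-fold application of the Proposition~\ref{prop:p2row}/Theorem~\ref{thm:cf2} machinery and the whole argument is clean; if not, the WZ-over-a-triangle route still works but requires the careful boundary accounting just described. A final remark: one should check the specialization $b=c=0$, i.e.\ $\kappa=(a,a,a)$, and the degenerate specializations recovering Theorem~\ref{thm:cf2} (when $\delta_3=0$, so $\kappa=(a,a-b,a-b)$ effectively behaves like a two-part partition after accounting for the trailing equal parts), as consistency tests for the formula before and after the proof.
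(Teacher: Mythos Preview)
The paper does not prove this statement: it is explicitly stated as a \emph{Conjecture}, and the only support given is computational verification for all $0\le b\le c\le a\le 14$. There is thus no paper proof to compare your proposal against.

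Your outline is the natural extension of the two-part machinery (Proposition~\ref{prop:p2row} and Theorem~\ref{thm:cf2}) and is surely what the authors themselves would have tried; that they left the result as a conjecture is a signal that at least one of your steps does not go through as smoothly as you expect. The most vulnerable point is step~(i). In two parts, the recurrence~\eqref{eq:CRec} is a one-index recursion and its solution is Gosper-summable, yielding the product in~\eqref{eq:2PartRec}. For a three-part~$\lambda$, the admissible $\mu$'s in~\eqref{eq:CRec} come from shifting mass between any of the three pairs of parts, so the ``row relation'' is governed by a genuinely two-dimensional recurrence on the $(\delta_2,\delta_3)$-lattice; your expectation that it \emph{factors} into a product of one-variable pieces of the shape $\binom{\delta}{j}(\tfrac12)_j/(\delta-j+\tfrac32)_j$ is plausible-looking but unsupported, and if it fails the whole step~(iii) summand is no longer a nice hypergeometric term. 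A second, related gap: the column sum~\eqref{eq:CInitial} at a three-part~$\lambda$ also involves $c_{\kappa',\lambda}$ for $\kappa'$ with only one or two parts, and for those you need closed forms for $c_{(n),\lambda}$ and $c_{(\ast,\ast),\lambda}$ with three-part~$\lambda$, which are \emph{not} provided by Theorem~\ref{thm:row1} or Theorem~\ref{thm:cf2} (both restricted to two-part~$\lambda$). These would have to be derived as part of your step~(i) as well.

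In short, your proposal is a reasonable research plan rather than a proof: the WZ endgame in step~(iii) is credible once the ingredients exist, but the existence of the clean intermediate closed forms you assume in step~(i) is itself the open problem.
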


\begin{conj}\label{conj:diag4}
  Let $\kappa=(a,a-b,a-c,a-d)$ with integers $0\leq b\leq c\leq d\leq a$ be a
  partition of $n=4a-b-c-d$ into at most four parts. Then the diagonal
  coefficient $c_{\kappa,\kappa}$ is given by
\[
\frac{(c+1)! \, (d-b+1)!}{(a-b+1)! \, (d+1)! \,  \bigl(d+\frac52\bigr)_{\!a-d}}
    \cdot \frac{n!}{\delta_1! \, \delta_2! \, \delta_3! \, \delta_4! \,
      \bigl(\delta_1+\frac32\bigr)_{\!\delta_2} \bigl(\delta_2+\frac32\bigr)_{\!\delta_3} \bigl(\delta_3+\frac32\bigr)_{\!\delta_4}},  
\]
  with $\delta_1=\kappa_1-\kappa_2=b$, $\delta_2=\kappa_2-\kappa_3=c-b$,
  $\delta_3=\kappa_3-\kappa_4=d-c$, and $\delta_4=\kappa_4-\kappa_5=a-d$ being
  the differences between consecutive parts of~$\kappa$.
\end{conj}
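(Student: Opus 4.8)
\textbf{Proof proposal for Conjecture~\ref{conj:diag4}.}
The plan is to mimic the strategy that succeeded for partitions with two parts in Theorem~\ref{thm:cf2}, but now one dimension higher. Concretely, I would first establish a three–part analogue of Proposition~\ref{prop:p2row}: fixing $\kappa=(a,a-b,a-c)$ and varying $\lambda=(a-e,a-b+e-f,a-c+f)$ over the partitions $\lambda\leq\kappa$ obtained by the moves in~\eqref{eq:CRec}, show that every $c_{\kappa,\lambda}$ equals an explicit product of Pochhammer symbols times $c_{\kappa,\kappa}$. This reduces the conjecture to proving the single diagonal value, and it is exactly the kind of identity one can discover by running our \textbf{ZonalCoefficient} routines for symbolic~$n$ and then verify by checking it satisfies the recurrence~\eqref{eq:CRec} via creative telescoping (HolonomicFunctions or Gosper on the relevant multivariate sum). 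The same bootstrap then has to be done for four parts, expressing $c_{(a,a-b,a-c,a-d),\lambda}$ over all reachable $\lambda$ as an explicit ratio times $c_{\kappa,\kappa}$.

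Next I would pin down $c_{\kappa,\kappa}$ itself through the column identity~\eqref{eq:CInitial}. Specializing~\eqref{eq:CInitial} to $\lambda=\kappa=(a,a-b,a-c)$, the partitions $\kappa'\geq\lambda$ contributing to the sum are (by Lemma~\ref{lem:zero1}) exactly those with at most three parts, and each $c_{\kappa',\kappa}$ can be rewritten — using the Proposition~\ref{prop:p2row}-style lemma just proven — as the conjectured closed form evaluated at shifted arguments times the as-yet-unknown diagonal $c_{\kappa',\kappa'}$. Inducting on the partition in lexicographic order (the base case being the lexicographically largest three-part partition, where~\eqref{eq:CInitial} has a single term), this turns~\eqref{eq:CInitial} into a hypergeometric summation identity in the multiplicities, of the shape of~\eqref{eq:sumid2} but with two free summation-type parameters instead of one. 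I would prove it by the WZ method exactly as in Theorem~\ref{thm:cf2}: construct companion functions $g_1,g_2,\dots$ certifying that the sum is independent of each of the parameters $a,b,c$ in turn, and then evaluate the resulting constant at a trivial corner such as $a=b=c=0$. The four-part case is structurally identical, with one more parameter and correspondingly one more WZ certificate, and the extra factor $\bigl(d+\tfrac52\bigr)_{a-d}$ in Conjecture~\ref{conj:diag4} is precisely what one expects the new telescoping direction to produce.

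The main obstacle I anticipate is not conceptual but combinatorial bookkeeping: in the $\geq3$-part setting a single application of~\eqref{eq:CRec} can move mass between any two of several parts, so the set of reachable $\mu$ (and the reordering needed to put $\mu$ back in decreasing order, as in the proof of Theorem~\ref{thm:zero2}) is genuinely more intricate, and verifying that the conjectured product formula satisfies~\eqref{eq:CRec} amounts to a \emph{nested} hypergeometric summation rather than a single Gosper step. Making that nested sum amenable to creative telescoping — choosing the right order of summation and the right auxiliary parameter to telescope in — is the delicate part; once the innermost sum collapses by Gosper (as it does for $d=1$ in Example~\ref{ex:twoparts}, whose three- and four-part analogues I would compute first as a sanity check), the outer layers should succumb to the same WZ machinery. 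A secondary subtlety is the degenerate case $\rho_\kappa=\rho_\lambda$ flagged in the remark after~\eqref{eq:CRec} (it first appears at $\kappa=(4,1,1)$): one must check that the closed form correctly returns the value dictated by Theorem~\ref{thm:zero2} (namely $0$ when~\eqref{eq:prop} holds) and that the induction never divides by a vanishing $\rho_\kappa-\rho_\lambda$, which should follow because the moves used in the induction strictly increase~$\rho$.
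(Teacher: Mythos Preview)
The paper does not prove this statement: Conjecture~\ref{conj:diag4} (and likewise Conjecture~\ref{conj:diag3}) is left open, supported only by exhaustive verification for $0\leq b\leq c\leq d\leq a\leq 10$. So there is no ``paper's own proof'' to compare your proposal against; your outline goes strictly beyond what the paper establishes.

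As for the viability of your plan, the overall architecture---row formula via~\eqref{eq:CRec}, then diagonal via~\eqref{eq:CInitial}, then WZ certificates---is the natural generalization of the two-part argument, but there is a real gap at the very first step. In the two-part case the recurrence~\eqref{eq:rec2} is a \emph{single} sum in one index, and Proposition~\ref{prop:p2row} shows that $c_{\kappa,\lambda}/c_{\kappa,\kappa}$ is a clean product of Pochhammer symbols. For three parts you assume the analogous ratio again has a product closed form in the parameters $(e,f)$ of $\lambda$, but this is not known and the paper offers no evidence for it; the authors give closed forms only for the diagonal entries, not for general $c_{\kappa,\lambda}$ in a three-part row. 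Without such a product formula, the column identity~\eqref{eq:CInitial} does not reduce to a hypergeometric sum amenable to WZ, and your induction cannot start. Moreover, even if a row formula exists, the set of $\mu$ in~\eqref{eq:CRec} for a three-part $\lambda$ is indexed by a choice of pair $(r,s)\in\{(1,2),(1,3),(2,3)\}$ \emph{and} a shift~$t$, so verifying that a candidate satisfies~\eqref{eq:CRec} is a sum of three Gosper-type sums whose individual pieces need not be hypergeometric in a common variable; this is qualitatively harder than the single telescoping in~\eqref{eq:sumid}. Your proposal correctly flags the bookkeeping and the $\rho_\kappa=\rho_\lambda$ degeneracy, but the decisive missing ingredient is the existence of the three- and four-part row closed forms themselves, which you would have to discover and justify before any of the summation machinery can be brought to bear.
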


We have verified Conjecture~\ref{conj:diag3} for all $0\leq b\leq c\leq a\leq
14$ and Conjecture~\ref{conj:diag4} for all $0\leq b\leq c\leq d\leq a\leq
10$.

\begin{rem}
Conjecture \ref{conj:diag3} with $c=a$ reduces to  Theorem~\ref{thm:cf2}  with $d=0$. Similarly, by letting $d=a$ in
Conjecture \ref{conj:diag4}, we have Conjecture \ref{conj:diag3}. 
\end{rem}

\section{SageMath package for the calculation of zonal polynomials}\label{sec:Package}

We briefly present the main functionality of the package \texttt{Zonal.sage},
which is freely available at \url{https://jiulin90.github.io/package.html},
and give a few examples. On the same website, we provide a
manual with further examples of its usage.

Let $\kappa, \lambda\in\mathcal{P}_n$ be partitions and $Y=(a, b, c, \ldots )$ be the variables.
\vspace{10bp}

\noindent \textbf{CZonal}$(\lambda, Y)$
computes the zonal polynomial $\mathcal{C}_{\lambda}(Y)$, by \eqref{eq:CInTermsOfM}.

\noindent\fbox{\begin{minipage}[t]{0.98\columnwidth \fboxsep \fboxrule}

\texttt{\textcolor{orange}{sage:}}\texttt{ load(}\texttt{\textcolor{cyan}{\textsf{'}}}\texttt{\textcolor{cyan}{Zonal.sage}}\texttt{\textcolor{cyan}{\textsf{'}}}\texttt{)}

\texttt{\textcolor{orange}{sage:}}\texttt{ var(}\texttt{\textcolor{cyan}{\textsf{'}}}\texttt{\textcolor{cyan}{a}}\texttt{\textcolor{cyan}{\textsf{'}}}\texttt{,}\texttt{\textcolor{cyan}{\textsf{'}}}\texttt{\textcolor{cyan}{b}}\texttt{\textcolor{cyan}{\textsf{'}}}\texttt{,}\texttt{\textcolor{cyan}{\textsf{'}}}\texttt{\textcolor{cyan}{c}}\texttt{\textcolor{cyan}{\textsf{'}}}\texttt{)}

\texttt{(a,b,c)}
\texttt{\textcolor{orange}{sage:}}\texttt{ CZonal({[}}\texttt{\textcolor{cyan}{2}}\texttt{\textcolor{black}{,}}\texttt{\textcolor{cyan}{1}}\texttt{{]},{[}a,b,c{]})}

\texttt{12/5{*}a\textasciicircum{}2{*}b + 12/5{*}a{*}b\textasciicircum{}2
+ 12/5{*}a\textasciicircum{}2{*}c + 18/5{*}a{*}b{*}c + 12/5{*}b\textasciicircum{}2{*}c}

\texttt{+ 12/5{*}a{*}c\textasciicircum{}2 + 12/5{*}b{*}c\textasciicircum{}2 }
\end{minipage}}

\vspace{5bp}

\noindent \textbf{MZonal}$(\lambda,Y)$
computes the monomial symmetric function $M_{\lambda}(Y)$ 
by \eqref{eq:MZonalComputation}.

\noindent\fbox{\begin{minipage}[t]{0.98\columnwidth \fboxsep \fboxrule}

\texttt{\textcolor{orange}{sage:}}\texttt{ MZonal({[}}\texttt{\textcolor{cyan}{2}}\texttt{\textcolor{black}{,}}\texttt{\textcolor{cyan}{2}}\texttt{\textcolor{black}{,}}\texttt{\textcolor{cyan}{1}}\texttt{{]},{[}a,b,c{]})}

\texttt{a\textasciicircum{}2{*}b\textasciicircum{}2{*}c + a\textasciicircum{}2{*}b{*}c\textasciicircum{}2
+ a{*}b\textasciicircum{}2{*}c\textasciicircum{}2}

\texttt{\textcolor{orange}{sage:}}\texttt{ MZonal({[}}\texttt{\textcolor{cyan}{2}}\texttt{\textcolor{black}{,}}\texttt{\textcolor{cyan}{1}}\texttt{{]},{[}a,b,c{]})}

\texttt{a\textasciicircum{}2{*}b + a{*}b\textasciicircum{}2 + a\textasciicircum{}2{*}c
+ b\textasciicircum{}2{*}c + a{*}c\textasciicircum{}2 + b{*}c\textasciicircum{}2}
\end{minipage}}

\vspace{5bp}
\pagebreak[1]

\noindent \textbf{Coeffi}$(\kappa,\lambda)$
computes the coefficient~$c_{\kappa,\lambda}$ for partitions $\kappa\geq\lambda$, by (\ref{eq:CRec}) and (\ref{eq:CInitial}). 

\noindent\fbox{\begin{minipage}[t]{0.98\columnwidth \fboxsep \fboxrule}
\texttt{\textcolor{orange}{sage:}}\texttt{ Coeffi({[}}\texttt{\textcolor{cyan}{5}}\texttt{\textcolor{black}{,}}\texttt{\textcolor{cyan}{4}}\texttt{{]},{[}}\texttt{\textcolor{cyan}{3}}\texttt{,}\texttt{\textcolor{cyan}{3}}\texttt{,}\texttt{\textcolor{cyan}{3}}\texttt{{]})}

\texttt{82944/1925}
\end{minipage}}

\begin{rem}
The SageMath software has built-in functions for Jack symmetric
functions, as mentioned in the Introduction,
where the zonal polynomials
$\mathcal{Z}_{\lambda}(Y):=\mathcal{C}_\lambda(Y)/c_{\lambda,\lambda}$ are also
implemented, as a special case of Jack polynomials. Many properties can be
checked, such as algebraic relations among Jack polynomials in the $P$, $J$, and
$Q$ bases. In particular, an example shows that
\[
  \left(\mathcal{Z}_{(2)}(Y)\right)^2=\frac{64}{45}\mathcal{Z}_{(2, 2)}(Y) +
  \frac{16}{21}\mathcal{Z}_{(3, 1)}(Y) + \mathcal{Z}_{(4)}(Y).
\]
In addition, one can expand $\mathcal{Z}_{\lambda}(Y)$ into an explicit expression, by
the command \textbf{expand}. Meanwhile, since our package only focuses on
calculations of zonal polynomials, it is notably faster by using our
\textbf{CZonal}. The screenshot below shows two computations of
$\mathcal{Z}_{(4,1,1)}(a,b,c)$ and $\mathcal{C}_{(4,1,1)}(a,b,c)$,
which also confirm
Conjecture \ref{conj:diag3} with $c_{(4,1,1)(4,1,1)}=
\mathcal{C}_{(4,1,1)}(a,b,c)/\mathcal{Z}_{(4,1,1)}(a,b,c)=16$.
\begin{center}
\includegraphics[width=\textwidth, height=7cm]{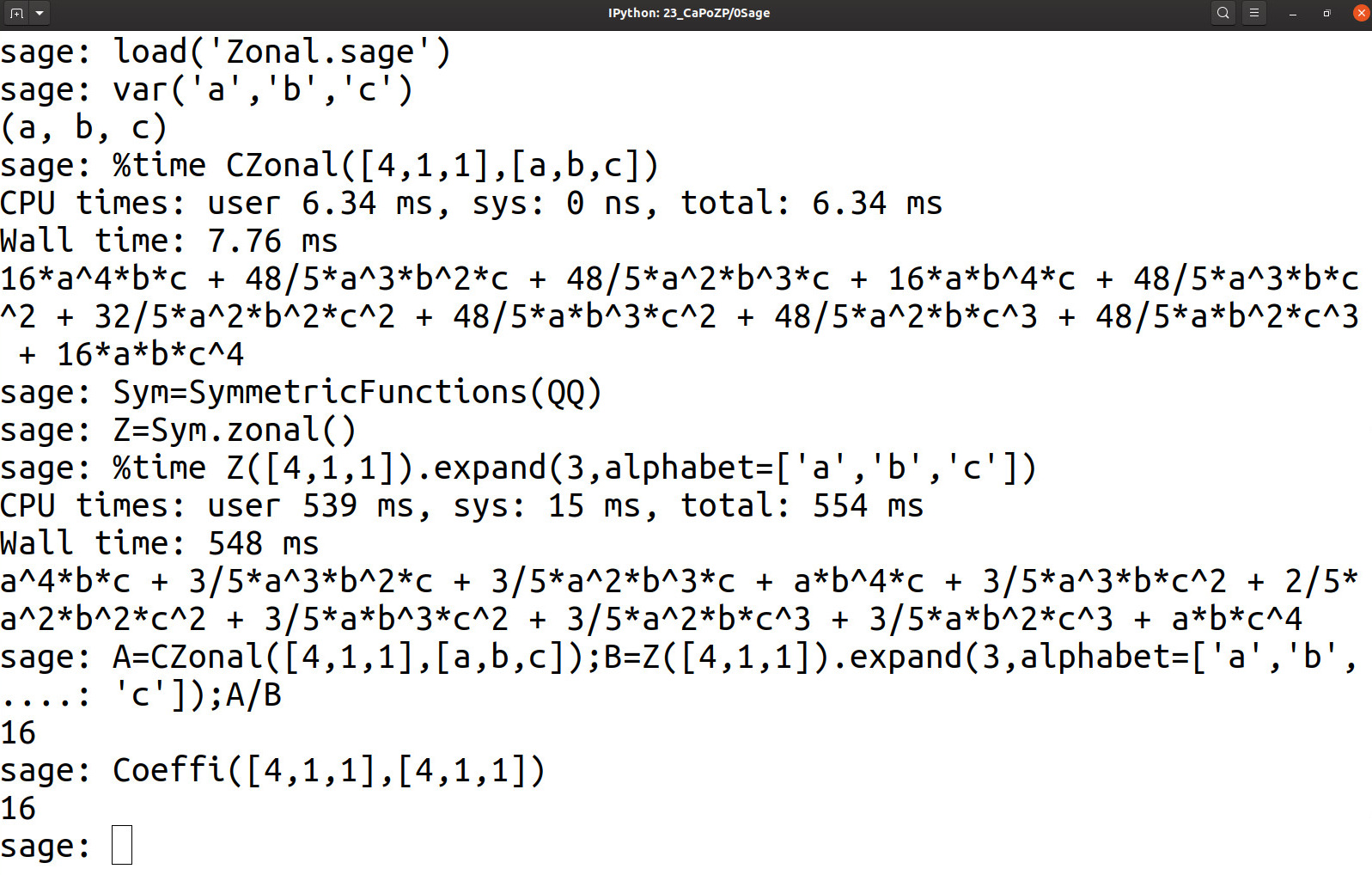}
\end{center}
\end{rem}

\section{Mathematica package for the calculation of zonal polynomials}\label{sec:mma}

We briefly describe our implementation of zonal polynomials in Mathematica.
Our software package \texttt{ZonalPolynomials.m} is freely available on
the website \url{www.koutschan.de/data/zonal/}. There we also provide a
demo notebook with further examples of its usage. The main functions of the
package are the following:
\medskip

\noindent\textbf{ZonalPolynomial}$[\lambda]$
gives the zonal polynomial indexed by the partition~$\lambda$,
in terms of the monomial symmetric functions~$M_\lambda$.
\begin{mma}
  \In |ZonalPolynomial|[\{3, 2\}] \\
  \Out \frac{48}{7}\,|M|[3, 2] + \frac{176}{21}\,|M|[2, 2, 1] +
    \frac{32}{7}\,|M|[3, 1, 1] + \frac{64}{7}\,|M|[2, 1, 1, 1] +
    \frac{80}{7}\,|M|[1, 1, 1, 1, 1] \\
\end{mma}

\noindent\textbf{ZonalPolynomial}$[\lambda, \{y_1,\dots,y_m\}]$
gives the zonal polynomial indexed by the partition~$\lambda$,
as a symmetric polynomial in the variables $y_1,\dots,y_m$.
\begin{mma}
  \In |ZonalPolynomial|[\{2,1\}, \{a,b,c\}] \\
  \Out \frac{12}{5}\,\bigl(a^2 b+a^2 c+a b^2+a c^2+b^2 c+b c^2\bigr)+\frac{18}{5} a b c \\
\end{mma}

\noindent\textbf{ZonalCoefficient}$[\kappa, \lambda]$
computes the zonal polynomial coefficient~$c_{\kappa,\lambda}$ recursively
by using \eqref{eq:CRec} and \eqref{eq:CInitial}.
\begin{mma}
  \In |ZonalCoefficient|[\{8, 6, 6, 3\}, \{7, 7, 5, 3, 1\}] // |Timing| \\
  \Out \left\{4.60311, \frac{33426505728}{5} \right\} \\
\end{mma}

\noindent\textbf{ZonalCoefficientTable}$[n]$
generates a table with all zonal polynomial coefficients~$c_{\kappa,\lambda}$,
where $\kappa$ and $\lambda$ are partitions of~$n$.
\begin{mma}
  \In |ZonalCoefficientTable|[4] \\
  \Out \biggl\{\Bigl\{1,\frac{4}{7},\frac{18}{35},\frac{12}{35},\frac{8}{35}\Bigr\},
    \Bigl\{0,\frac{24}{7},\frac{16}{7},\frac{88}{21},\frac{32}{7}\Bigr\},
    \Bigl\{0,0,\frac{16}{5},\frac{32}{15},\frac{16}{5}\Bigr\},
    \Bigl\{0,0,0,\frac{16}{3},\frac{64}{5}\Bigr\}, \linebreak
    \phantom{\biggl\{}\Bigl\{0,0,0,0,\frac{16}{5}\Bigr\}\biggr\} \\
\end{mma}

\noindent\textbf{ZonalCoefficientN}$[\kappa, \lambda]$
gives a symbolic expression (a rational function in~$n$) for the
zonal coefficient $c_{\kappa,\lambda}$ in the upper left corner.
The partitions $\kappa$ and $\lambda$ must be of the form $(n-i,\pi)$,
where $\pi$ is a partition of~$i$ and where $n$ is symbolic.
\begin{mma}
  \In |ZonalCoefficientN|[\{n-3,2,1\},\{n-4,2,2\}] \\
  \Out \frac{4 (n-3) (n-1) n (2 n^2-18 n+39)}{5 (2 n-11) (2 n-7)} \\
\end{mma}

\paragraph*{Acknowledgment}
We are grateful to Akimichi Takemura for helpful comments,  to Yi Zhang for
stimulating discussions, and to Tom Koornwinder for his comments on the relation between Macdonald polynomials and Jack polynomials. 
The first author also would like to thank Raymond Kan, who, after the
submission of the original draft, contributed to the SageMath program and now
becomes a co-contributor.
Both authors were supported by the Austrian Science Fund (FWF): P29467-N32, and the second author was also supported by F5011-N15.

\bibliographystyle{plain}
\bibliography{zonal}

\begin{thebibliography}{10}

\bibitem{DLMF}
Nist digital library of mathematical functions.
\newblock http://dlmf.nist.gov/, Release 1.0.20 of 2018-09-15.
\newblock F.~W.~J. Olver, A.~B. {Olde Daalhuis}, D.~W. Lozier, B.~I. Schneider,
  R.~F. Boisvert, C.~W. Clark, B.~R. Miller and B.~V. Saunders, eds.

\bibitem{ButlerPaige11}
Ronald Butler and Robert Paige.
\newblock Exact distributional computations for {R}oy's statistic and the
  largest eigenvalue of a {W}ishart distribution.
\newblock {\em Statistics and Computing}, 21:147--157, 2011.

\bibitem{GrossRichards87}
Kenneth Gross and Donald Richards.
\newblock Special functions of matrix argument. {I}. {A}lgebraic induction,
  zonal polynomials, and hypergeometric functions.
\newblock {\em Transactions of the AMS}, 301(2):781--811, 1987.

\bibitem{Hashiguchi}
Hiroki Hashiguchi, Yasuhide Numata, Nobuki Takayama, and Akimichi Takemura.
\newblock The holonomic gradient method for the distribution function of the
  largest root of a {W}ishart matrix.
\newblock {\em Journal of Multivariate Analysis}, 117:296--312, 2013.

\bibitem{Helgason}
Sigurdur Helgason.
\newblock {\em Differential Geometry and Symmetric Spaces}.
\newblock Academic Press, 1962.

\bibitem{Representation}
R.~Guti\'{e}rrez J\'{a}imez and J.~A.~Mermoso Guti\'{e}rrez.
\newblock An application of zonal polynomials to the generalization of
  probability distributions.
\newblock {\em Linear Algebra and its Applications}, 121:610--616, 1989.

\bibitem{James1}
A.~James.
\newblock Calculation of zonal polynomial coefficients by use of the
  {L}aplace-{B}eltrami operator.
\newblock {\em The Annals of Mathematical Statistics}, 39(5):1711--1718, 1968.

\bibitem{Johnstone01}
Iain Johnstone.
\newblock On the distribution of the largest eigenvalue in principal components
  analysis.
\newblock {\em The Annals of Statistics}, 29(2):295--327, 2001.

\bibitem{Guess}
Manuel Kauers.
\newblock Guessing handbook.
\newblock Technical Report 09-07, RISC Report Series, Johannes Kepler
  University, Linz, Austria, 2009.
\newblock
  http:/$\!$/www.risc.jku.at/\linebreak[0]research/\linebreak[0]combinat/\linebreak[0]software/\linebreak[0]Guess/.

\bibitem{KoevEdelman06}
Plamen Koev and Alan Edelman.
\newblock The efficient evaluation of the hypergeometric function of a matrix
  argument.
\newblock {\em Mathematics of Computation}, 75(254):833--846, 2006.

\bibitem{HolonomicFunctions}
Christoph Koutschan.
\newblock {HolonomicFunctions} (user's guide).
\newblock Technical Report 10-01, RISC Report Series, Johannes Kep\-ler
  University, Linz, Austria, 2010.
\newblock
  http:/$\!$/www.risc.jku.at/\linebreak[0]research/\linebreak[0]combinat/\linebreak[0]software/\linebreak[0]HolonomicFunctions/.

\bibitem{KushnerMeisner}
H.~B. Kushner and Morris Meisner.
\newblock Formulas for zonal polynomials.
\newblock {\em Journal of Multivariate Analysis}, 14:336--347, 1984.

\bibitem{Moakher}
Maher Moakher.
\newblock A differential geometric approach to the geometric mean of symmetric
  positive-definite matrices.
\newblock {\em SIAM Journal on Matrix Analysis and Applications},
  26(3):735--747, 2005.

\bibitem{Muirhead70}
Robb Muirhead.
\newblock Systems of partial differential equations for hypergeometric
  functions of matrix argument.
\newblock {\em The Annals of Mathematical Statistics}, 41(3):991--1001, 1970.

\bibitem{Muirhead}
Robb Muirhead.
\newblock {\em Aspects of multivariate statistical theory}.
\newblock Wiley series in probability and mathematical statistics.
  {P}robability and mathematical statistics. John Wiley \& Sons, New York,
  1982.

\bibitem{NakayamaEtAl11}
Hiromasa Nakayama, Kenta Nishiyama, Masayuki Noro, Katsuyoshi Ohara,
  Tomo{\-}nari Sei, Nobuki Takayama, and Akimichi Takemura.
\newblock Holonomic gradient descent and its application to the
  {F}isher-{B}ingham integral.
\newblock {\em Advances in Applied Mathematics}, 47(3):639--658, 2011.

\bibitem{Noro16}
Masayuki Noro.
\newblock System of partial differential equations for the hypergeometric
  function {1F1} of a matrix argument on diagonal regions.
\newblock In {\em Proceedings of the International Symposium on Symbolic and
  Algebraic Computation (ISSAC)}, ISSAC '16, pages 381--388, New York, NY, USA,
  2016. ACM.

\bibitem{SiriteanuEtAl15}
Constantin Siriteanu, Akimichi Takemura, Satoshi Kuriki, Hyundong Shin, and
  Christoph Koutschan.
\newblock {MIMO} zero-forcing performance evaluation using the holonomic
  gradient method.
\newblock {\em IEEE Transactions on Wireless Communications}, 14(4):2322--2335,
  2015.

\bibitem{Stembridge95}
John Stembridge.
\newblock A {M}aple package for symmetric functions.
\newblock {\em Journal of Symbolic Computation}, 20:755--768, 1995.

\bibitem{Takemura}
Akimichi Takemura.
\newblock {\em Zonal Polynomials}, volume~4 of {\em Institute of Mathematical
  Statistics Lecture Notes -- Monograph Series}.
\newblock Institute of Mathematical Statistics, Hayward, CA, 1984.

\bibitem{Wishart28}
John Wishart.
\newblock The generalised product moment distribution in samples from a normal
  multivariate population.
\newblock {\em Biometrika}, 20A(1-2):32--52, 1928.

\end{thebibliography}

\end{document}